\theoremstyle{definition}
\newtheorem{defi}{Definition}[section]
\newtheorem{theorem}[defi]{Theorem}
\newtheorem{corollary}[defi]{Corollary}
\newtheorem{lemma}[defi]{Lemma}
\newtheorem{prop}[defi]{Proposition}
\newtheorem{eg}[defi]{Example}
\DeclareMathOperator{\spa}{span}
\newcommand{\aff}{\mathrm{aff}}
\DeclareMathOperator{\rd}{rd}
\DeclareMathOperator{\gd}{gd}
\DeclareMathOperator{\diag}{diag}
\DeclareMathOperator{\si}{si}
\DeclareMathOperator{\Aut}{Aut}
\DeclareMathOperator{\sign}{sign}
\title{Realizable Dimension of Periodic Frameworks}
\author{Ryoshun Oba\thanks{Department of Mathematical Informatics, Graduate School of Information Science and Technology, University of Tokyo, Tokyo 113-8656, Japan. Email: \texttt{ryoshun\_oba@mist.i.u-tokyo.ac.jp}}
\and Shin-ichi Tanigawa\thanks{Department of Mathematical Informatics, Graduate School of Information Science and Technology, University of Tokyo, Tokyo 113-8656, Japan. Email: \texttt{tanigawa@mist.i.u-tokyo.ac.jp}}}
\begin{document}
\maketitle
\begin{abstract}
    Belk and Connelly introduced the realizable dimension $\rd(G)$ of a finite graph $G$, which is the minimum nonnegative integer $d$ such that every framework $(G,p)$ in any dimension admits a framework in $\mathbb{R}^d$ with the same edge lengths.
    They characterized finite graphs with realizable dimension at most $1$, $2$, or $3$ in terms of forbidden minors.
    In this paper, we consider periodic frameworks and extend the notion to $\mathbb{Z}$-symmetric graphs. We give a forbidden minor characterization of $\mathbb{Z}$-symmetric graphs with realizable dimension at most $1$ or $2$, and show that the characterization can be checked in polynomial time for given quotient $\mathbb{Z}$-labelled graphs.
\end{abstract}
\section{Introduction}

A \emph{framework} in the Euclidean space $\mathbb{R}^d$ is a graph in which each vertex is a point in $\mathbb{R}^d$ and each edge is a straight line connecting the endvertices.
It is denoted by a pair $(G,p)$, where $G$ is the underlying (possibly infinite) undirected graph and $p:V(G)\rightarrow \mathbb{R}^d$ is a map called a point configuration.
The \emph{affinie dimension} of a framework $(G,p)$ is defined to be the affine dimension of $p(V(G))$.
Two frameworks $(G,p)$ and $(G,q)$, possibly in different dimensions, are \emph{equivalent} if $\|p(i)-p(j)\|=\|q(i)-q(j)\|$ for all $ij \in E(G)$, where $\|\cdot\|=\|\cdot\|_2$ is the Euclidean norm.
Belk and Connelly~\cite{belk2007realizability} introduced a minor monotone parameter called the realizable dimension of finite graphs.
A finite graph $G$ is \emph{$d$-realizable} if every framework $(G,p)$ in any dimension admits an equivalent framework with the affine dimenision at most $d$.
The \emph{realizable dimension} $\rd(G)$ of a finite graph $G$ is the smallest nonegative integer $d$ such that $G$ is $d$-realizable. 
Belk and Connelly characterized finite graphs satisfying $\rd(G) \leq d$ in terms of forbidden minors for $d=1,2,3$ \cite{belk2007threedim,belk2007realizability}.
The lists of forbidden minors are $\{K_3\}$ when $d=1$, $\{K_4\}$ when $d = 2$, and $\{K_5,K_{2,2,2}\}$ when $d=3$.

In this paper, we extend the notion of realizable dimension to a periodic setting.
The rigidity of periodic frameworks has been widely investigated in the past decade \cite{borcea2010periodic,kaszanitzky2021global,malestein2013generic,MT14}. See~\cite{schulze2017rigidity} for further reference.
Since our aim in this paper is to examine the extendability of the concept to periodic frameworks, here we focus on the simplest setting, where frameworks have one dimensional periodicity (see Figure~\ref{fig:1}) and motions of frameworks are subject to a given fixed periodicity.
An infinite graph $G$ is \emph{$\mathbb{Z}$-symmetric} if an additive group $\mathbb{Z}$ acts freely on $\Aut(G)$ and the number of vertex orbits is finite. 
We fix an action and the action of $\gamma \in \mathbb{Z}$ on $V(G)$ is denoted by $+\gamma$.
Two $\mathbb{Z}$-symmetric graphs are isomorphic if there is an ordinary graph isomorphism compatible with the $\mathbb{Z}$-actions.
For a nonzero vector $\ell \in \mathbb{R}^d$, a framework $(G,p)$ in $\mathbb{R}^d$ is called \emph{$\ell$-periodic} if $G$ is a $\mathbb{Z}$-symmetric graph and $p(v+1)-p(v)=\ell$ for all $v \in V(G)$. The vector $\ell$ is called the \emph{lattice vector} of $(G,p)$.
A $\mathbb{Z}$-symmetric graph $G$ is \emph{$d$-realizable} if every $\ell$-periodic framework $(G,p)$ in any dimension with any lattice vector $\ell$ admits an equivalent $\ell$-periodic framework with the affine dimension at most $d$.
The \emph{realizable dimension} $\rd(G)$ of a $\mathbb{Z}$-symmetric graph $G$ is defined to be the smallest nonnegative integer $d$ such that $G$ is $d$-realizable.
We show that the parameter $\rd(\cdot)$ is minor monotone, where a minor operation of a $\mathbb{Z}$-symmetric graph is defined as a deletion or contraction of a vertex orbit or an edge orbit which does not destroy $\mathbb{Z}$-symmetry. Our main theorem is a forbidden minor characterization of $\mathbb{Z}$-symmetric graphs with realizable dimension at most $1$ or $2$.
Our proof of the characterizations leads to a polynomial time algorithm to check the $1$-realizability or $2$-realizability of $\mathbb{Z}$-symmetric graphs when a $\mathbb{Z}$-symmetric graph is given in terms of its quotient $\mathbb{Z}$-labelled graph.

\begin{figure} \label{fig:1}
    \centering


\begin{tikzpicture}
    \tikzset{vertex/.style={draw,fill=black,circle,inner sep=2pt,minimum size=3pt}}
    \node[vertex] (a2) at (3,0) {};
    \node[vertex] (b2) at (3,2) {};
    \node[vertex] (c2) at (1,1) {};
    \node[vertex] (a3) at (6,0) {};
    \node[vertex] (b3) at (6,2) {};
    \node[vertex] (c3) at (4,1) {};
    \draw (a2) to (b2);
    \draw (a2) to (c2);
    \draw (b2) to (c2);
    \draw (a3) to (b3);
    \draw (b3) to (c3);
    \draw (c3) to (a3);
    \draw (c2) to (b3);
    \draw (a2) to (b3);
    \node (a1) at (0.5,0) {};
    \node (b1) at (0.5,2) {};
    \node (c1) at (0.5,1) {};
    \node (a4) at (6.5,0) {};
    \node (b4) at (6.5,2) {};
    \node (c4) at (6.5,1) {};
    \node (d1) at (0.5,1.5) {};
    \draw (d1) to (b2);
    \node (e1) at (0.5,1/3) {};
    \draw (e1) to (b2);
    \node (d4) at (6.5,1.5) {};
    \node (e4) at (6.5,1/3) {};
    \draw (d4) to (c3);
    \draw (e4) to (a3);

    \node (l1) at (3,2.5) {};
    \node (l2) at (6,2.5) {};
    \draw[->,>=latex] (l1) to node [midway, above] {$\ell$} (l2);
\end{tikzpicture}
\qquad
\begin{tikzpicture}
    \tikzset{vertex/.style={draw,fill=black,circle,inner sep=2pt,minimum size=3pt}}
    \node[vertex] (a) at (0,1) {};
    \node[vertex] (b) at (2,2) {};
    \node[vertex] (c) at (2,0) {};
    \draw[->,>=latex] (a) to [bend left=20] node [midway,above] {$+0$} (b);
    \draw[->,>=latex] (a) to [bend right=20] node [midway,above] {$+1$} (b);
    \draw[->,>=latex] (c) to [bend right=20] node [midway,right] {$+0$} (b);
    \draw[->,>=latex] (c) to [bend left=20] node [midway,left] {$+1$} (b);
    \draw[->,>=latex] (a) to node [midway,below] {$+0$} (c);
\end{tikzpicture}
\caption{An affine $2$-dimensional periodic framework with the lattice vector $\ell$ (left) and the quotient $\mathbb{Z}$-labelled graph of its underlying $\mathbb{Z}$-symmetric graph (right)}
\end{figure}
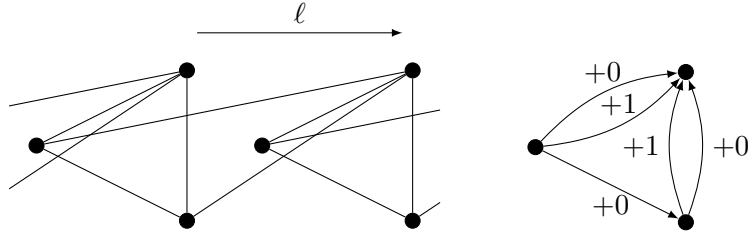

In the case of finite graphs, the characterization of $d$-realizable graphs is done by first enumerating obstacles of $d$-realizability, that is, graphs whose realizable dimension is more than $d$, and then by showing that graphs that avoid these obstacles as a minor have a nice inductive construction~\cite{belk2007realizability}.
For proving that $G$ is an obstacle of $d$-realizability, it is enough to find a $d+1$-dimensional globally rigid realization $(G,p)$ of $G$, i.e., $(G,p)$ has no equivalent but non-congruent framework in $\mathbb{R}^{d+1}$.
A useful sufficient condition of the global rigidity of frameworks is super stability by Connelly \cite{connelly1982rigidity}, which actually guarantees stronger rigidity property called universal rigidity.
A modern exposition on the connection between universal rigidity and super stability is given through a semidefinite programming (SDP) formulation \cite{SY07}. 
Motivated by this background, in this paper we extend the super stability condition to the periodic setting based on an unpublished work~\cite{sean}. Then periodic super stability is used to construct obstacles of $d$-realizability in the periodic setting.

The paper is organized as follows.
In Section 2, we give preliminary facts on $\mathbb{Z}$-labelled graphs.
In Section 3, we formally introduce periodic frameworks and the periodic extension of realizable dimension.
In Section 4, we investigate affine images of frameworks as a tool to find lower affine dimensional equivalent frameworks.
In Section 5, equilibrium stresses of periodic frameworks are introduced and a relationship between periodic universal rigidity and periodic super stability is discussed. 
In Section 6, we determine the realizable dimension of $\mathbb{Z}$-symmetric graphs such that for every pair of vertex orbits there is an edge orbit between them.
In Section 7, we introduce two operations to obtain $d$-realizable $\mathbb{Z}$-labelled graphs from smaller $d$-realizable $\mathbb{Z}$-labelled graphs. Then we characterize $\mathbb{Z}$-symmetric graphs with realizable dimension at most $1$ or $2$, and give a polynomial time algorithm.

\section{$\mathbb{Z}$-labelled Graphs} \label{sec:rd2}
Throughout the paper we assume that $\mathbb{Z}$ is an additive group.
For two edges of a directed multigraph, we say that they are parallel if they have the same set of endvertices.
For two parallel edges $e, f$ of a directed multigraph, we say that $e$ and $f$ have the same direction if the heads of $e$ and $f$ coincide, and have the inverse directions if the head of $e$ and the tail of $f$ coincide.
Let $K_n$ be a simple undirected complete graph on $n$ vertices.
By abuse of notation, a directed graph obtained by orienting each edge of $K_n$ is also denoted as $K_n$.

\subsection{$\mathbb{Z}$-symmetric graphs and $\mathbb{Z}$-labelled graphs}
Let $G=(V,E)$ be a $\mathbb{Z}$-symmetric graph.
A vertex (resp. edge) orbit is an orbit of the $\mathbb{Z}$-action on $V$ (resp. $E$), and the set of vertex (resp. edge) orbits is denoted as $V/\mathbb{Z}$ (resp. $E/\mathbb{Z}$).
We always assume that $V/\mathbb{Z}$ and $E/\mathbb{Z}$ are finite sets.
In a $\mathbb{Z}$-symmetric graph $G$, a vertex subset $\hat{V}$ is called a \emph{representative vertex set} if it is a transversal of the set of vertex orbits of $G$, i.e., $\hat{V}$ has exactly one element from each vertex orbit. The unique element of a vertex orbit is called the \emph{representative vertex} of the orbit (in $\hat{V}$).
Observe that a $\mathbb{Z}$-symmetric graph can be represented as a directed multigraph with an edge $\mathbb{Z}$-labelling as follows.
For a vertex representative set $\hat{V}$, each edge orbit is written as $\{\{i+\beta,j+\beta+\gamma\}:\beta \in \mathbb{Z}\}$ for some $i,j \in \hat{V}$ and $\gamma \in \mathbb{Z}$. Regard this edge orbit as a directed edge from $i$ to $j$ with a label $\gamma$. 
The resulting pair of a directed multigraph and an edge $\mathbb{Z}$-labelling is called the \emph{quotient $\mathbb{Z}$-labelled graph} of $G$ with respect to $\hat{V}$ (see Figure~\ref{fig:1} for an example).

In general, a \emph{$\mathbb{Z}$-labelled graph} (or \emph{$\mathbb{Z}$-gain graph}) is a pair $(\hat{G},z)$ of a finite directed multigraph $\hat{G}=(\hat{V},\hat{E})$ and a map $z:\hat{E} \rightarrow \mathbb{Z}$. We allow $\hat{G}$ to have selfloops.
$z$ is called an \emph{(edge) labelling}, and it is \emph{simple} if selfloops have nonzero labels and no pair of parallel edges has the same direction and the same label nor has the inverse directions and inverse labels.
We always assume that the edge labelling of a $\mathbb{Z}$-labelled graph is simple, and we write an edge $e$ from $i$ to $j$ with a label $z(e)$ as $(i,j;z(e))$.
A vertex (resp. edge) set of $\mathbb{Z}$-labelled graph $(\hat{G},z)$ is denoted by $\hat{V}(\hat{G})$ (resp. $\hat{E}(\hat{G})$).
For a $\mathbb{Z}$-labelled graph $(\hat{G},z)$, its \emph{lift graph} (or \emph{covering graph}) is an undirected graph $G=(V,E)$, where $V=\hat{V}(\hat{G}) \times \mathbb{Z}=\{i+\gamma:i \in \hat{V}(\hat{G}), \gamma \in \mathbb{Z}\}$ and
\begin{align*}
    E =\{ \{i+\gamma,j+(\gamma+z(e))\} : (i,j;z(e)) \in \hat{E}(\hat{G}), \gamma \in \mathbb{Z} \}.
\end{align*}
The lift graph of $\mathbb{Z}$-labelled graph is $\mathbb{Z}$-symmetric. 
We remark that the edge set of a $\mathbb{Z}$-labelled graph is identified with the set of edge orbits of a $\mathbb{Z}$-symmetric graph.

We define the notion of isomorphism of $\mathbb{Z}$-labelled graphs.
An \emph{edge inversion} of a $\mathbb{Z}$-labelled graph is an operation in which an orientation of an edge is inverted and its label is also inverted.
For a $\mathbb{Z}$-labelled graph $(\hat{G},z)$, a \emph{switching} of $i \in \hat{V}$ by $\gamma \in \mathbb{Z}$ changes each edge $(i,j;z(e)))$ to $(i,j;z(e)+\gamma)$, and each edge $(j,i;z(e))$ to $(j,i;z(e)-\gamma)$. The labels of selfloops incident to $i$ are unchanged.
$\mathbb{Z}$-labelled graphs $(\hat{G},z)$ and $(\hat{H},w)$ are \emph{isomorphic} if after a sequence of edge inversions and switchings, there is a bijection $\varphi:\hat{V}(\hat{G}) \rightarrow \hat{V}(\hat{H})$ such that $(i,j;\gamma) \in \hat{E}(\hat{G})$ if and only if $(\varphi(i),\varphi(j);\gamma) \in \hat{E}(\hat{H})$.
$\mathbb{Z}$-labelled graphs are isomorphic if and only if their lift graphs are isomorphic $\mathbb{Z}$-symmetric graphs.
To see an example, let $K_2^\bullet$ be a directed multigraph on $\{1,2\}$ with a pair of parallel edges $e,f$ from $1$ to $2$. 
Then, $(K_2^\bullet,z)$ and $(K_2^\bullet,z')$ are isomorphic if and only if $z(e)-z(f)=\pm (z'(e)-z'(f))$.

We introduce minor operations for $\mathbb{Z}$-labelled graphs.
Let $(\hat{G},z)$ be a $\mathbb{Z}$-labelled graph.
An \emph{edge (resp. vertex) deletion} is an operation in which an edge $e$ (resp. a vertex $i$) is deleted and the labelling is restricted to the remaining edges.
The resulting $\mathbb{Z}$-labelled graph is denoted as $(\hat{G}-e,z)$ (resp. $(\hat{G}-i,z)$).
An \emph{edge contraction} of an edge $e=(i,j;0)$ is an operation in which endvertices $i$ and $j$ are identified and the labelling is restricted to the remaining edges, and if any, selfloops with zero label or some parallel edges are deleted to make the edge labelling simple\footnote{Although there may be multiple ways to delete parallel edges, the resulting $\mathbb{Z}$-labelled graphs are isomorphic, so one may delete any edges.}.
The resulting $\mathbb{Z}$-labelled graph is denoted as $(\hat{G}/e,z)$.
Note that we define an edge contraction only along an edge with $0$ label. Note also that one can always convert the labelling of an edge, which is not a selfloop, to $0$ by a switching.
A deletion or contraction of a vertex and an edge in a $\mathbb{Z}$-labelled graph is interpreted in terms of the lift graph as was described in the introduction. 
Let $(\hat{G},z)$ and $(\hat{H},w)$ be $\mathbb{Z}$-labelled graphs.
$(\hat{G},z)$ is a subgraph of $(\hat{H},w)$ if a $\mathbb{Z}$-labelled graph isomorphic to $(\hat{G},z)$ is obtained from $(\hat{H},w)$ by a sequence of edge deletions and vertex deletions. 
$(\hat{G},z)$ is a \emph{minor} of $(\hat{H},w)$ if a $\mathbb{Z}$-labelled graph isomorphic to $(\hat{G},z)$ is obtained from $(\hat{H},w)$ by a sequence of edge deletions, vertex deletions and edge contractions.
$(\hat{G},z)$ is a minor of $(\hat{H},w)$ if and only if the lift graph of $(\hat{G},z)$ is a minor of the lift graph of $(\hat{H},w)$ as a $\mathbb{Z}$-symmetric graph.

A \emph{walk} in a $\mathbb{Z}$-labelled graph $(\hat{G},z)$ is an alternating sequence $v_1e_1v_2\cdots e_kv_{k+1}$ of vertices and edges of $\hat{G}$ such that $v_i$ and $v_{i+1}$ are the endvertices of $e_i$, and it is \emph{closed} if $v_{k+1}=v_1$.
A closed walk $v_1e_1v_2\cdots e_kv_1$ is a \emph{cycle} if $v_1, \ldots, v_k$ are distinct.
For a closed walk $v_1e_1v_2\cdots e_kv_1$, its \emph{gain} is defined as $\sum_{i=1}^k \sign(e_i)z(e_i)$, where $\sign(e_i)=1$ if $e_i$ is forward direction and $\sign(e_i)=-1$ otherwise. Note that the gain of a closed walk is well-defined as it is invariant by edge inversions or switchings.
A closed walk with zero (resp. nonzero) gain is called \emph{balanced} (resp. \emph{unbalanced}).
A $\mathbb{Z}$-labelled graph is \emph{balanced} if every closed walk, or equivalently every cycle, is balanced.
Balancedness is preserved by minor operations.
A balanced $\mathbb{Z}$-labelled graph is isomorphic to a $\mathbb{Z}$-labelled graph with all labels equal to $0$.
See, e.g., \cite{kaszanitzky2021global} for more details.

\section{Periodic Frameworks and the Realizable Dimension of $\mathbb{Z}$-symmetric Graphs}
In this secton we introduce basic notation for periodic frameworks.
Note that throughout the paper we distinguish an affine $d$-dimensional framework and a framework in $\mathbb{R}^d$.

\subsection{Fixed-lattice model}
Recall that in this paper we consider frameworks with one-dimensional periodicity and assume that the underlying lattice is fixed.
In order to visualize the effect of the fixed lattice, we introduce additional notations which are special in this paper.

Let $(\hat{G}^\circ,z)$ be a $\mathbb{Z}$-labelled graph obtained from $(\hat{G},z)$ by adding a new vertex $L$ not in $\hat{V}$ and a selfloop $e_L$ incident to $L$ with a label $+1$. The edge set $\hat{E}\cup\{e_L\}$ of $\hat{G}^\circ$ is denoted as $\hat{E}^\circ$.
The addition of the selfloop $e_L$ to $(\hat{G},z)$ results in the addition of a new exceptional edge orbit to the lift graph $G$ of $(\hat{G},z)$. The new edge orbit is denoted as $E_L$. Then there is an obvious correspondence between
$(E \cup E_L)/\mathbb{Z}$ and $\hat{E}^\circ \cup \{e_L\}$. Hence, in the subsequent discussion, we sometimes identify $(E \cup E_L)/\mathbb{Z}$ and $\hat{E}^\circ \cup \{e_L\}$ if it is clear from the context.

A framework is called \emph{periodic} if it is $\ell$-periodic for some vector $\ell$.
Since our focus in this paper is the model in which the underlying lattice length is fixed, we say that two periodic frameworks $(G,p)$ and $(G,q)$ are \emph{equivalent} if $\|p(u)-p(v)\|=\|q(u)-q(v)\|$ for all $uv \in E(G)$ and their lattice vectors have the same lengths. 
Then a $\mathbb{Z}$-symmetric graph $G$ is $d$-realizable if and only if every periodic framework $(G,p)$ in any dimension admits an equivalent periodic framework in $\mathbb{R}^d$.

\subsection{Realizable dimension of $\mathbb{Z}$-labelled graphs}
We say that a $\mathbb{Z}$-labelled graph is \emph{$d$-realizable} if its lift graph is $d$-realizable, and the \emph{realizable dimension} $\rd(\hat{G},z)$ of a $\mathbb{Z}$-labelled graph $(\hat{G},z)$ is defined to be the realizable dimension of its lift graph. We verify the minor monotonicity of the parameter.
\begin{prop} \label{prop:minor}
    If a $\mathbb{Z}$-labelled graph $(\hat{G},z)$ is a minor of a $\mathbb{Z}$-labelled graph $(\hat{H},w)$, then $\rd(\hat{G},z) \leq \rd(\hat{H},w)$.
\end{prop}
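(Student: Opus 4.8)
The plan is to reduce the statement to a single elementary observation about minor operations, combined with the equivalence (stated in Section~2) that $(\hat G,z)$ is a minor of $(\hat H,w)$ if and only if the lift graph $G$ of $(\hat G,z)$ is a minor of the lift graph $H$ of $(\hat H,w)$ as a $\mathbb Z$-symmetric graph. Since $\rd$ of a $\mathbb Z$-labelled graph is \emph{defined} to be $\rd$ of its lift graph, it suffices to prove: if a $\mathbb Z$-symmetric graph $G$ is a minor of a $\mathbb Z$-symmetric graph $H$, then $\rd(G)\le \rd(H)$. By induction on the number of minor operations, it is enough to treat a single operation, namely deletion of a vertex orbit, deletion of an edge orbit, or contraction of an edge orbit (with zero label).

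First I would handle deletions. Let $d=\rd(H)$ and let $(G,p)$ be an $\ell$-periodic framework of $G$ in some $\mathbb R^N$. Because $G=H\setminus(\text{orbit})$, the point configuration $p$ extends trivially to an $\ell$-periodic framework $(H,\tilde p)$ in $\mathbb R^N$: in the edge-orbit deletion case we keep the same vertex configuration and simply reinstate the missing edge orbit (the endpoints are already placed); in the vertex-orbit deletion case we place the representative of the missing orbit and its translates at, say, $\tilde p(v+\gamma)=\gamma\ell$, which is $\ell$-periodic, and restore the incident edge orbits. Since $H$ is $d$-realizable, $(H,\tilde p)$ has an equivalent $\ell$-periodic framework $(H,\tilde q)$ with affine dimension at most $d$. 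Restricting $\tilde q$ to $V(G)$ gives an $\ell$-periodic framework $(G,q)$ equivalent to $(G,p)$ (equality of edge lengths on $E(G)\subseteq E(H)$ is inherited, and the lattice vectors have equal length by hypothesis) with affine dimension at most $d$. Hence $\rd(G)\le d$.

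Next I would handle contraction of an edge orbit $e$ with zero label, so $e$ is the orbit of $\{i,j\}$ with $i,j$ representatives and $G=H/e$. Given an $\ell$-periodic framework $(G,p)$ in $\mathbb R^N$, let $v_0$ be the vertex of $G$ obtained from identifying $i$ with $j$. Define $\tilde p$ on $V(H)$ by $\tilde p(i+\gamma)=\tilde p(j+\gamma)=p(v_0+\gamma)$ and $\tilde p=p$ elsewhere; this is $\ell$-periodic, and $\tilde p$ assigns length $0$ to the edge orbit $e$ and the correct lengths to every other edge orbit of $H$ (each such orbit corresponds to an edge orbit of $G$ with the same endpoints after identification). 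By $d$-realizability of $H$ there is an equivalent $\ell$-periodic $(H,\tilde q)$ of affine dimension $\le d$; equivalence forces $\|\tilde q(i+\gamma)-\tilde q(j+\gamma)\|=0$, i.e.\ $\tilde q(i+\gamma)=\tilde q(j+\gamma)$ for all $\gamma$, so $\tilde q$ descends to a well-defined $\ell$-periodic framework $(G,q)$ of affine dimension $\le d$, equivalent to $(G,p)$. Thus $\rd(G)\le d$, completing the induction.

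The routine points to be careful about are that all three auxiliary configurations $\tilde p$ are genuinely $\ell$-periodic (immediate from the formulas, since the added or duplicated coordinates are either $\gamma\ell$ or copies of already-periodic values) and that the bookkeeping between edge orbits of $H$ and edge orbits of $G$ is exactly the correspondence used to define the minor operations on $\mathbb Z$-symmetric graphs. The only mild subtlety — the step I expect to require the most care in writing — is the contraction case, where one must note that in a simple $\mathbb Z$-labelled graph a contractible edge has label $0$ (or is made so by a switching, which does not change the lift graph up to isomorphism), so the identification $\tilde p(i+\gamma)=\tilde p(j+\gamma)$ is consistent with the $\mathbb Z$-action, and that deleting parallel edges to keep the labelling simple does not affect edge lengths since those edge orbits are realized with identical lengths. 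None of this is deep; the proposition is a formal consequence of the definitions together with the lift-graph compatibility of minors.
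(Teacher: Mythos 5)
Your proposal is correct and follows essentially the same route as the paper: reduce to single deletion/contraction steps, extend a periodic framework of $G$ to one of $H$ (placing $i$ and $j$ at the same point in the contraction case), apply $d$-realizability of $H$, and restrict back, noting that the zero-length constraint on the contracted edge orbit forces the identification to persist. Your extra details (explicit placement of a deleted vertex orbit and the remark about deleted parallel edges) are harmless refinements of the same argument.
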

\begin{proof}
    It suffices to show the statement in two cases: (i) $(\hat{G},z)$ is a subgraph of $(\hat{H},w)$, (ii) $(\hat{G},z)=(\hat{H}/e,w)$ for an edge $e \in \hat{E}(\hat{H})$.
    Let $G$ (resp. $H$) be the lift graph of $(\hat{G},z)$ (resp. $(\hat{H},w)$).
    
    For (i), suppose that $(\hat{G},z)$ is a subgraph of $(\hat{H},w)$.
    Let $(G,p)$ be a periodic framework in some dimension. Take a periodic framework $(H,q)$ in the same dimension such that $q|_{V(G)}=p$. By definition, $(H,q)$ admits an equivalent framework $(H,q')$ in $\mathbb{R}^d$, where $d=\rd(H)$. Then $(G,q'|_{V(G)})$ is equivalent to $(G,p)$. Hence $G$ is $d$-realizable. Thus the statement holds in the case (i). 
    
    For (ii), suppose that $(\hat{G},z)=(\hat{H}/e,w)$ for an edge $e=(i,j;0) \in \hat{E}(\hat{H})$. Let $\hat{V} \subset V(H)$ be a representative vertex set such that the quotient $\mathbb{Z}$-labelled graph of $H$ with respect to $\hat{V}$ is $(\hat{H},w)$. Let $(G,p)$ be a periodic framework in some dimension. Take a periodic framework $(H,q)$ in the same dimension such that $q|_{V(G)}=p$ and $q(i)=q(j)$. By definition, $(H,q)$ admits an equivalent framework $(H,q')$ in $\mathbb{R}^d$ with $d=\rd(H)$. Then, by the edge length constraint on $e$, we have $q'(i)=q'(j)$. Then equalities on the other edge lengths in $\hat{E}(\hat{H})$ implies that $(G,q'|_{V(G)})$ is equivalent to $(G,p)$. Hence $G$ is $d$-realizable. Thus the statment holds in the case (ii).    
\end{proof}
By Proposition~\ref{prop:minor}, the set of $d$-realizable $\mathbb{Z}$-labelled graphs is minor closed. 
We now state our main result in terms of $\mathbb{Z}$-symmetric graphs. 
Let $(K_k,0)$ be the balanced complete graph on $k$ vertices.
Let $K_3^{\bullet \bullet}$ be a directed multigraph on $\{1,2,3\}$ whose edge set consists of a single edge from $1$ to $2$ and parallel edges between $\{1,3\}$ and $\{2,3\}$.
\begin{theorem} \label{thm:main}
    For a $\mathbb{Z}$-symmetric graph $G$, the followings hold:
    \begin{enumerate}
        \item[(i)] $\rd(G) \leq 1$ if and only if $G$ has no minor isomorphic to the lift graph of $(K_3,0)$ nor to the lift graph of $(K_2^\bullet,z)$ for any simple labelling $z$.
        \item[(ii)] $\rd(G) \leq 2$ if and only if $G$ has no minor isomorphic to the lift graph of $(K_4,0)$ nor to the lift graph of $(K_3^{\bullet \bullet},z)$ for any simple labelling $z$.
    \end{enumerate}
\end{theorem}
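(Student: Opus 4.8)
The plan is to prove parts (i) and (ii) in parallel, each by proving the two implications separately; fix $d\in\{1,2\}$ and let $(\hat G,z)$ be a quotient $\mathbb{Z}$-labelled graph of $G$. For the ``only if'' direction --- equivalently, that having one of the listed minors forces $\rd(G)>d$ --- it suffices by Proposition~\ref{prop:minor} to show that each of the four lift graphs is not $d$-realizable. The two \emph{balanced} obstructions reduce to the finite case: since $(K_{d+2},0)$ has all labels zero, a periodic framework of its lift graph is nothing but an arbitrary realization of $K_{d+2}$ together with a lattice vector $\ell$ on which only $\|\ell\|$ is constrained and only by itself, so re-realization in $\mathbb{R}^d$ amounts to re-realizing $K_{d+2}$ there; hence $\rd(\text{lift}(K_{d+2},0))=\rd(K_{d+2})=d+1$ by Belk--Connelly. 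For the two \emph{unbalanced} obstructions $(K_2^\bullet,z)$ and $(K_3^{\bullet\bullet},z)$, I would instead exhibit an explicit $\ell$-periodic framework in $\mathbb{R}^{d+1}$ of affine dimension exactly $d+1$ carrying a proper periodic equilibrium stress whose stress matrix (in the fixed-lattice model of Section~5, on the augmented quotient vertex set) is positive semidefinite of maximal rank; by the periodic super-stability criterion established in Section~5 such a framework is periodically universally rigid, hence has no equivalent periodic framework in $\mathbb{R}^d$ at all, so the graph is not $d$-realizable. A small degrees-of-freedom count (three or two representative points, plus $\ell$ moving on a sphere of radius $\|\ell\|$, minus translations of the points and rotations of everything) shows these graphs are generically isostatic precisely in $\mathbb{R}^{d+1}$ and over-constrained in $\mathbb{R}^{d}$, which is both a sanity check and a guide for where to place the points. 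This must be done uniformly in $z$, but after switchings every simple $(K_2^\bullet,z)$ is determined by a single positive integer and every simple $(K_3^{\bullet\bullet},z)$ by a bounded amount of data, so finitely many parametrized constructions suffice; for $d=1$ the case of $(K_2^\bullet,z)$ can alternatively be dispatched by hand, a generic $2$-dimensional realization of one unbalanced $2$-cycle having no collinear re-realization that preserves $\|\ell\|$.

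For the ``if'' direction I would combine the two preceding sections. Section~6 computes $\rd$ for $\mathbb{Z}$-labelled graphs in which every pair of vertex orbits is joined by an edge orbit, and in particular singles out the ``dense'' building blocks whose realizable dimension is at most $d$. Section~7 provides two operations --- the periodic analogues of gluing along one or two vertex orbits --- which are shown there to preserve $d$-realizability. The key structural claim driving the proof is: \emph{a $\mathbb{Z}$-labelled graph with no minor among the listed obstructions is obtained from the $d$-realizable dense blocks of Section~6 by repeatedly applying these two operations and taking subgraphs.} Granting this claim, Theorem~\ref{thm:main} follows at once, since each dense block is $d$-realizable and $d$-realizability propagates along the operations. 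I would prove the structural claim by induction on $|\hat V(\hat G)|+|\hat E(\hat G)|$: if $(\hat G,z)$ is itself dense we are in a base case handled by Section~6; otherwise, absence of a $(K_{d+2},0)$-minor together with absence of the unbalanced obstruction forces a small separation among the vertex orbits (a cut of size at most $d$, or a low-connectivity configuration around an unbalanced part), along which $(\hat G,z)$ splits into strictly smaller $\mathbb{Z}$-labelled graphs that still avoid every obstruction; apply induction to each and recombine using the corresponding Section~7 operation.

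I expect this decomposition step to be the principal obstacle. In the unlabelled setting it specializes to the classical fact that $K_4$-minor-free graphs are series-parallel, but here one must simultaneously control the balanced part of the graph and the gains around cycles: excluding $(K_3^{\bullet\bullet},z)$ (resp.\ $(K_2^\bullet,z)$) limits how many independent unbalanced cycles may be present and how they may be attached to the rest, and the whole argument must respect the $\mathbb{Z}$-labelling --- contractions are allowed only along zero-labelled edges, switchings alter labels, and ``the same minor for every simple $z$'' must be excluded at once rather than for one fixed gain graph. Arranging the separation-and-recombination so that the pieces remain obstruction-free, and matching the recombination to exactly the operations whose $d$-realizability-preservation is proved in Section~7, is where most of the care will lie; I anticipate an auxiliary lemma describing, up to isomorphism, the possible ``$k$-connected'' pieces for $k\le d+1$ under the minor restriction. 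Finally, the polynomial-time algorithm is a by-product: the inductive decomposition can be carried out greedily on the quotient $\mathbb{Z}$-labelled graph (repeatedly detect and split off a piece, or certify membership in a dense block), and testing for the listed minors reduces to a bounded number of connectivity computations and gain (cycle-space) computations, all polynomial in the size of the quotient graph.
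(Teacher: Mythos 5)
Your overall strategy coincides with the paper's (obstacles via periodic super stability and reduction to the finite Belk--Connelly results; sufficiency via decomposing obstruction-free quotients into small complete-quotient blocks glued by the Section~7 operations), but the heart of the theorem --- your ``key structural claim'' --- is exactly what you leave unproven, and it is not a routine verification. For $d=2$ the paper establishes it in two nontrivial steps: (a) excluding $(K_4,0)$ together with all $(K_3^{\bullet\bullet},z)$ forces $\si(\hat G)$ to be $K_4$-minor-free, because the gains of the four triangles of any $(K_4,w)$-minor sum to zero, so an unbalanced $(K_4,w)$ has two unbalanced triangles whose common edge can be contracted to a $K_3^{\bullet\bullet}$-minor; and (b) at a degree-2 vertex $v$ of the $2$-connected graph $\si(\hat G)$ with neighbours $x,y$, in the case where the multiplicity between $v$ and $x$ is at least $2$, a Menger argument shows that $\hat G-v$ must be \emph{balanced} (an unbalanced cycle in $\hat G - v$ plus two disjoint paths to $x$ and $y$ would yield a $K_3^{\bullet\bullet}$-minor), and only then can $(\hat G,z)$ be written as a \emph{balanced} $2$-sum and the induction closed. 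Your sketch (``a cut of size at most $d$ \dots\ recombine using the corresponding Section~7 operation'') overlooks that the $2$-sum lemma preserves $d$-realizability only when one side is balanced: the paper exhibits two $2$-realizable pieces sharing a $(K_2,0)$ whose union is not $2$-realizable, so an unqualified ``split along a small separation and recombine'' step is false as stated. Locating where balancedness can be forced is the actual content of the sufficiency proof, and it is missing from your plan.

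On the obstacle side there are smaller gaps. Your plan for the unbalanced obstructions (explicit periodically super stable frameworks in $\mathbb{R}^{d+1}$) is viable but not executed: you never address the conic condition, which is half of the super-stability definition, and the PSD full-rank stress must be produced for every simple labelling, an infinite family (e.g.\ $(K_2^\bullet,z)$ is parametrized by an arbitrary positive gain difference). The paper avoids both issues uniformly: when $\si(\hat G)=K_n$ and the multiplicity graph is spanning, the matrices $F_e$ span $\mathcal{L}^{n+1}$, which simultaneously provides a nonnegative full-rank stress for \emph{every} affine full-dimensional periodic framework and rules out any nonzero conic matrix, giving periodic universal rigidity in all dimensions and hence $\rd=n$. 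Your treatment of the balanced obstructions $(K_{d+2},0)$ by reduction to the finite case is fine.
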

The proof of Theorem~\ref{thm:main} is given in Section~\ref{sec:characterization}.
We remark that a minor closed class of $\mathbb{Z}$-labelled graphs is not necessarily characterized by a finite number of forbidden minors.
We should remark that our definition of minors for $\mathbb{Z}$-labelled graphs is consistent with that for $\Gamma$-labelled graphs for a general group $\Gamma$~\cite{geelen2009excluding}.
When the underlying group $\Gamma$ is finite, every minor closed class of $\Gamma$-labelled graphs is characterized by a finite number of forbidden minors \cite{geelen2009excluding}.

\subsection{Quotient frameworks}
For a periodic framework $(G,p)$, let $\hat{V} \subset V(G)$ be a representative vertex set, $(\hat{G},z)$ be the quotient $\mathbb{Z}$-labelled graph of $G$ with respect to $\hat{V}$, $\hat{p}=p|_{\hat{V}}$ be the positions of representative vertices, and $\ell$ be the lattice vector.
The quadruple $(\hat{G},z,\hat{p},\ell)$ is called the \emph{quotient framework} of $(G,p)$ with respect to $\hat{V}$.
We remark that a quotient framework depends on the choice of representative vertices.

We write $\aff(Y)$ for the affine span of a subset $Y$ of $\mathbb{R}^d$ and $\spa (Y)$ (or $\spa Y$) for its linear span.
We usually denote the number of vertices of a $\mathbb{Z}$-labelled graph $(\hat{G},z)$ as $n$ and denote $\hat{V}(\hat{G})=\{1, \ldots, n\}$.
A map $\hat{p}:\hat{V}(\hat{G}) \rightarrow \mathbb{R}^d$ is identified with a matrix $\hat{P} \in \mathbb{R}^{d \times n}$ whose $i$th column is $\hat{p}(i)$.

For a periodic framework $(G,p)$ in $\mathbb{R}^d$, let $(\hat{G},z,\hat{p},\ell)$ be the quotient framework of $(G,p)$ with respect to $\hat{V}$.
Then we have $\aff(p(V)) = \aff(\hat{p}(\hat{V}))+\spa\{\ell\}$ and the affine dimension of $(G,p)$ equals to
\[
    \rank \begin{pmatrix} \hat{P} & \ell \\ \bm{1}_n^\top & 0 \end{pmatrix} -1,
\]
where $\bm{1}_n \in \mathbb{R}^n$ is the all one vector.
In particular, the affine dimension of a periodic framework $(G,p)$ is at most $|\hat{V}|=|V/\mathbb{Z}|$.
Thus, the realizable dimension $\rd(G)$ of a $\mathbb{Z}$-symmetric graph $G$ is at most $|V/\mathbb{Z}|$.

\section{Lower Dimensional Affine Images and the Conic Condition}
Since the seminal work by Connelly~\cite{connelly1982rigidity,connelly2012tensegrities,connelly2018affine}, affine transformations have played an important role in the analysis of global or universal rigidity of frameworks. In this section we adapt known facts on affine transformations of frameworks to the periodic setting.
Let $\mathcal{S}^n$ be the set of $n \times n$ real symmetric matrices.
For $A, B \in \mathcal{S}^n$, let $\langle A,B \rangle = \tr(A^\top B)$ be the trace inner product.
If a symmetric matrix $A$ is positive semidefinite, we denote $A\succeq O$.

For a framework $(G,p)$ in $\mathbb{R}^d$, a matrix $A \in \mathbb{R}^{d \times d}$, and a vector $t \in \mathbb{R}^d$, define an \emph{affine image} $q:V(G) \rightarrow \mathbb{R}^d$ of $p$ by $q(i)=Ap(i)+t$ for all $i \in V(G)$. Then if $(G,p)$ and $(G,q)$ are equivalent, the equality of the length of $uv \in E(G)$ gives
\[
    (p(u)-p(v))^\top (A^\top A -I_d) (p(u)-p(v)) =0.
\]
In view of this, we say that an $\ell$-periodic framework $(G,p)$ in $\mathbb{R}^d$ satisfies the \emph{conic condition} if the edge directions including the lattice vector do not lie on a conic at infinity, i.e., if there is no nonzero matrix $S \in \mathcal{S}^d$ satisfying
\begin{equation} \label{eq:conic}
    (p(u)-p(v))^\top S (p(u)-p(v)) =0 \quad \text{ for all } uv \in E(G) \quad \text{ and } \quad \ell^\top S\ell=0.
\end{equation}

For example, for a periodic framework in $\mathbb{R}^d$, if there are two hyperplanes $H_1, H_2 \subset \mathbb{R}^d$ such that every edge including lattice vector is parallel to either $H_1$ or $H_2$, it does not satisfy the conic condition, since $S=a_1a_2^\top+a_2a_1^\top$ satisfies (\ref{eq:conic}), where $a_i$ is a normal vector of $H_i$ for $i=1,2$.
This in particular implies that, if $(G,p)$ satisfies the conic condition, then it is affine full dimensional.

The following proposition implies that, if $(G,p)$ does not satisfy the conic condition, then not only it admits an equivalent affine transformation but also has a lower affine-dimensional equivalent realization. The corresponding statement for ordinary finite frameworks has been already discussed in \cite{connelly2012tensegrities}. We check that the same argument works in the periodic setting for completeness.
\begin{prop} \label{prop:flex}
    Let $(G,p)$ be a periodic framework in $\mathbb{R}^d$ which does not satisfy the conic condition.
    Then there exists an affine $d'$-dimensional periodic framework $(G,q)$ equivalent to it for some $d' <d$.
\end{prop}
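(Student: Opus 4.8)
The plan is to adapt Connelly's affine-perturbation argument \cite{connelly2012tensegrities} to the fixed-lattice periodic setting, treating the lattice vector as one extra edge direction. Since $(G,p)$ does not satisfy the conic condition, by definition there is a nonzero $S \in \mathcal{S}^d$ with $(p(u)-p(v))^\top S (p(u)-p(v)) = 0$ for every $uv \in E(G)$ and $\ell^\top S \ell = 0$. First I would consider the set $T := \{t \in \mathbb{R} : I_d + tS \succeq O\}$. It is a closed interval containing $0$, and since $S \ne O$ it is a proper subinterval of $\mathbb{R}$: the eigenvalues of $I_d + tS$ are $1 + t\mu_i$, where $\mu_1,\dots,\mu_d$ are the eigenvalues of $S$, and at least one $\mu_i$ is nonzero. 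Hence $T$ has a finite endpoint $t^*$, and at $t^*$ the matrix $M := I_d + t^*S$ is positive semidefinite but singular, so $r := \rank M < d$.

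Next I would build the new framework. Factor $M = A^\top A$ with $A \in \mathbb{R}^{d \times d}$ (for instance $A = M^{1/2}$), so that $\rank A = \rank M = r$, and define $q(i) := Ap(i)$ for all $i \in V(G)$. Three verifications show that $(G,q)$ is a periodic framework equivalent to $(G,p)$: (a) for each $uv \in E(G)$ one has $\|q(u)-q(v)\|^2 = (p(u)-p(v))^\top M (p(u)-p(v)) = \|p(u)-p(v)\|^2 + t^* \cdot 0 = \|p(u)-p(v)\|^2$; (b) for all $v \in V(G)$ one has $q(v+1)-q(v) = A(p(v+1)-p(v)) = A\ell$, so $(G,q)$ is $(A\ell)$-periodic; (c) $\|A\ell\|^2 = \ell^\top M \ell = \|\ell\|^2 + t^* \cdot 0 = \|\ell\|^2$, so the lattice vectors of $(G,p)$ and $(G,q)$ have equal length. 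Therefore $(G,q)$ is equivalent to $(G,p)$ in the fixed-lattice sense.

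Finally, every $q(i)$ and the lattice vector $A\ell$ lie in $\im A$, a subspace of $\mathbb{R}^d$ of dimension $r$. Hence, taking a representative vertex set $\hat V$ and writing $\hat Q = A\hat P$, the matrix $\left(\begin{smallmatrix} \hat Q & A\ell \\ \bm{1}^\top & 0\end{smallmatrix}\right)$ --- whose rank minus $1$ equals the affine dimension of $(G,q)$ --- has rank at most $r+1$; hence $(G,q)$ has affine dimension $d' \le r < d$, as required.

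I do not anticipate a genuine difficulty; this is essentially the finite argument. The two points requiring attention are (i) checking that $T$ really has a finite endpoint, which is exactly where the hypothesis $S \ne O$ is used, and (ii) observing that the single scalar constraint $\ell^\top S \ell = 0$ --- the lattice part of the conic condition --- is precisely what forces $A\ell$ to retain its length, so that $(G,q)$ is equivalent to $(G,p)$ as periodic frameworks with a fixed lattice length, not merely as ordinary frameworks.
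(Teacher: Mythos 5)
Your proposal is correct and follows essentially the same route as the paper's proof: choose $t$ so that $I_d+tS$ is positive semidefinite and singular, take a square root $A$, and set $q=Ap$. You simply spell out two details the paper leaves implicit, namely the existence of the boundary value $t^*$ and the verification that $\ell^\top S\ell=0$ preserves the lattice-vector length (hence equivalence in the fixed-lattice sense).
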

\begin{proof}
    Let $S \in \mathcal{S}^d$ be a nonzero matrix satisfying (\ref{eq:conic}).
    There is a real number $t \in \mathbb{R}$ such that $I_d -tS \succeq O$ and $d':=\rank (I_d -tS) \leq d-1$.
    Let $A\in\mathcal{S}^d$ be a singular matrix satisfying $A^2=I_d -tS$, and define $q:V(G) \rightarrow \mathbb{R}^d$ by $q(i)=Ap(i)$ for all $i \in V(G)$.
    Then $(G,q)$ is equivalent to $(G,p)$ and its affine dimension is at most $d'$.
    As $(G,q)$ is an affine image of $(G,p)$, if $(G,p)$ is periodic, $(G,q)$ is also periodic.
\end{proof}

\section{Periodic Super Stability}
In this section we introduce the concept of perodic super stability to prove the lower bound of realizable dimension.
Let $\bm{e}_i \in \mathbb{R}^n$ be the $i$-th unit vector, and $E_{i,j}=\bm{e}_i\bm{e}_j^\top \in \mathbb{R}^{n \times n}$. 
Let $\bm{1}_{n+1}' := \begin{pmatrix}\bm{1}_n^\top & 0 \end{pmatrix}^\top \in \mathbb{R}^{n+1}$.
Let $\mathcal{L}^{n+1}:=\{A \in \mathcal{S}^{n+1}:A\bm{1}_{n+1}'=0 \}$ and $\mathcal{L}^{n+1}_+:=\{A \in \mathcal{L}^{n+1}:A \succeq O\}$.

\subsection{Equilibrium stress and periodic super stability}
Recall that for a quotient $\mathbb{Z}$-labelled graph $(\hat{G},z)$ of a $\mathbb{Z}$-symmetric graph $G$, $(E \cup E_L)/\mathbb{Z}$ and $\hat{E}^\circ=\hat{E}\cup \{e_L\}$ are identified.
A map $\omega: (E \cup E_L)/\mathbb{Z} \rightarrow \mathbb{R}$ can be considered as an edge weight on $\hat{E}^\circ$.

For a periodic framework $(G,p)$ in $\mathbb{R}^d$, fix a representative vertex set $\hat{V} \subset V$, and let $(\hat{G},z,\hat{p},\ell)$ be the corresponding quotient framework. 
For an edge $e=(i,j;z(e)) \in \hat{E}$, define the indicator vector $i_e \in \mathbb{R}^{n+1}$ and the edge vector $v_e \in \mathbb{R}^d$ by
\begin{align*}
    &i_e := \begin{pmatrix} \bm{e}_j-\bm{e}_i \\ z(e) \end{pmatrix}, \\
    &v_e:=\begin{pmatrix} \hat{P} & l \end{pmatrix}i_e =\hat{p}(j)+z(e)l-\hat{p}(i),
\end{align*}
and for $e_L$, let $i_{e_L}:=\bm{e}_{n+1}$ and $v_{e_L}:=\ell$.
Borcea and Streinu~\cite{borcea2010periodic} defined a rigidity matrix $R(\hat{G},z,\hat{p},\ell)$ of a periodic framework by a $|\hat{E}^\circ| \times d(n+1)$ matrix whose row indexed by $e=(i,j;z(e)) \in E$ is 
\[
    \kbordermatrix{
        &&i&&j&&L \\
        &\cdots 0 \cdots&-v_e^\top&\cdots0\cdots&v_e^\top&\cdots 0 \cdots&z(e)v_e^\top
    },
\]
and a row indexed by $e_L$ is
\[
    \kbordermatrix{
        &&&&L \\
        &\cdots 0 \cdots&0&\cdots 0 \cdots&v_{e_L}^\top
    },
\]
where each vertex has associated consecutive $d$ columns and the last $d$ columns are associated with the lattice.
Note that if an edge $e$ is inverted, the labelling becomes $-z(e)$ and the edge vector becomes $-v_e$, so $R(\hat{G},z,\hat{p},\ell)$ is invariant by edge inversion.
We have the following lemma.
\begin{lemma} \label{lem:eq}
    Let $(G,p)$ be a periodic framework and let $\hat{V}, \hat{V}' \subset V(G)$ be representative vertex sets. 
    Let $(\hat{G},z,\hat{p},\ell)$ (resp. $(\hat{G}',z',\hat{p}',\ell)$) be the quotient framework of $(G,p)$ with respect to $\hat{V}$ (resp. $\hat{V}'$).
    Then for any $\omega : (E \cup E_L)/\mathbb{Z} \rightarrow \mathbb{R}$, $\omega \in \ker R(\hat{G},z,\hat{p},\ell)^\top$ if and only if $\omega \in \ker R(\hat{G}',z',\hat{p}',\ell)^\top$.
\end{lemma}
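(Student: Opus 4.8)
The plan is to reduce the statement to the case where $\hat V$ and $\hat V'$ differ by a single representative vertex, and then to track explicitly how the rigidity matrix — equivalently, the equilibrium condition it encodes — transforms. Recall that an element $\omega \in \ker R(\hat G,z,\hat p,\ell)^\top$ is precisely an \emph{equilibrium stress}: it assigns a scalar $\omega(e)$ to each edge orbit (including $E_L$) so that, for every vertex column block and the lattice column block, the weighted sum of the incident edge vectors vanishes. Concretely, for each $i \in \hat V$ one gets $\sum_{e \ni i} \pm\omega(e) v_e = 0$ (with the signs recording head/tail), and in the lattice block one gets $\omega(e_L)\ell + \sum_{e \in \hat E} \omega(e) z(e) v_e = 0$. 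The key observation is that these equations are really statements about the full $\mathbb{Z}$-symmetric framework $(G,p)$ and the $\mathbb{Z}$-invariant stress obtained by spreading $\omega$ over each orbit; they do not, a priori, know about the choice of $\hat V$.

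First I would make this precise: given a periodic framework $(G,p)$ and a map $\omega:(E\cup E_L)/\mathbb Z \to \mathbb R$, lift $\omega$ to the $\mathbb Z$-invariant edge weight $\tilde\omega$ on $E \cup E_L$. I claim $\omega \in \ker R(\hat G,z,\hat p,\ell)^\top$ if and only if, for every vertex $v \in V(G)$, $\sum_{uv \in E(G)} \tilde\omega(uv)\,(p(u)-p(v)) = 0$ together with the single lattice equation. The ``vertex'' part is immediate once one checks that the vertex-$i$ block of the $R$-equation, namely $\sum_{e\ni i}\pm\omega(e)v_e=0$, is literally the vertex-$v$ equilibrium equation $\sum_{uv}\tilde\omega(uv)(p(u)-p(v))=0$ for the representative $v$ of the orbit of $i$ — here one uses $v_e = p(j')-p(i')$ for the appropriate lift $i'v_e j'$ of the edge, and $\mathbb Z$-invariance of $\tilde\omega$ and of the vector differences — and that equilibrium at one vertex of an orbit is equivalent to equilibrium at every vertex of that orbit, again by $\mathbb Z$-invariance (translating by $+\gamma$ carries one equation to another). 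The lattice equation needs slightly more care: I would verify directly that the quantity $\omega(e_L)\ell + \sum_{e\in\hat E}\omega(e)z(e)v_e$ is independent of $\hat V$. Expanding $v_e = \hat p(j)+z(e)\ell-\hat p(i)$ and summing, the terms $\sum_e \omega(e)z(e)(\hat p(j)-\hat p(i))$ rearrange into $\sum_{i\in\hat V}\big(\sum_{e\ni i}\pm\omega(e)z(e)\big)\hat p(i)$; changing the representative of the orbit of $i$ from $p(i)$ to $p(i)+p(1)\cdot$(shift) modifies this sum by $\sum_{e\ni i}\pm\omega(e)z(e)$ times a fixed vector, and one checks this coefficient is exactly what also changes in the labels $z(e)$ under a switching — so the two changes cancel. (Alternatively, and more cleanly, observe that under a switching of vertex $i$ by $\gamma$ the labels transform by $z(e)\mapsto z(e)\pm\gamma$ while $\hat p(i)\mapsto \hat p(i)+\gamma\ell$, and one shows the combined change leaves both $v_e$ and the lattice sum invariant; this is the periodic analogue of the well-known fact that gain-switching is a symmetry of the gain-sparsity/rigidity setup.)

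With the characterization ``$\omega\in\ker R(\hat G,z,\hat p,\ell)^\top \iff (G,p)$ has $\tilde\omega$ as a $\mathbb Z$-invariant equilibrium stress'' in hand, the lemma is immediate: the right-hand side makes no reference to $\hat V$ at all, so it holds for $(\hat G,z,\hat p,\ell)$ if and only if it holds for $(\hat G',z',\hat p',\ell)$. I would then also note for the record that passing from $\hat V$ to $\hat V'$ is realized by a sequence of single-vertex switchings followed by relabelling of vertices, so it suffices to handle one switching and one relabelling, both of which are covered by the invariance computations above.

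The main obstacle I anticipate is the bookkeeping in the lattice-block equation: unlike the vertex blocks, which transform transparently, the lattice row mixes the labels $z(e)$ with the positions $\hat p(i)$, and these two pieces of data both change under a switch of representative. The crux of the proof is the cancellation showing that $\omega(e_L)\ell+\sum_{e}\omega(e)z(e)v_e$ is a switching-invariant, and I expect that getting the signs (head vs.\ tail, forward vs.\ inverted edges) consistent there is where the real work lies; everything else is routine once the correspondence with $\mathbb Z$-invariant equilibrium stresses of $(G,p)$ is set up.
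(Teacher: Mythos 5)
Your reduction to a single change of representative and your treatment of the vertex blocks are fine and essentially match the paper's argument: the edge vector $v_e$ is unchanged when the representative $i$ is replaced by $i+\gamma$ (the shift of $\hat p(i)$ by $\gamma\ell$ is absorbed by the shift of the labels on edges at $i$), so the vertex equations $-\sum_{e\in\delta_{\mathrm{out}}(i)}\omega(e)v_e+\sum_{e\in\delta_{\mathrm{in}}(i)}\omega(e)v_e=0$ read identically in both quotients. The gap is exactly at the point you yourself flag as the crux: the quantity $\omega(e_L)\ell+\sum_{e\in\hat E}\omega(e)z(e)v_e$ is \emph{not} independent of $\hat V$ for an arbitrary $\omega$, so the cancellation you propose cannot hold unconditionally. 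Since $v_e$ is invariant while $z(e)$ shifts by $\pm\gamma$ on the edges incident to $i$, replacing $i$ by $i+\gamma$ changes this sum by $\gamma\bigl(\sum_{e\in\delta_{\mathrm{out}}(i)}\omega(e)v_e-\sum_{e\in\delta_{\mathrm{in}}(i)}\omega(e)v_e\bigr)$ up to an overall sign, i.e.\ by $\gamma$ times the vertex equilibrium expression at $i$, which is nonzero for general $\omega$. A one-edge example already refutes the invariance claim: $\hat V=\{1,2\}$, a single edge $e=(1,2;0)$, $\omega(e)=1$, $\omega(e_L)=0$ gives lattice sum $0$, but after moving the representative of $1$ the label becomes $\pm 1$ and the sum becomes $\pm v_e\neq 0$. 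Consequently your intrinsic reformulation ("the right-hand side makes no reference to $\hat V$ at all") is not available: the lattice row genuinely depends on the choice of representatives.

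The lemma is still true, and the repair is short — and is precisely what the paper does. The change of the lattice row under a representative swap is a scalar multiple of the vertex-$i$ equilibrium expression, and the vertex rows themselves are unchanged; hence membership in the left kernel, which includes the vertex equations, is preserved: if $\omega$ satisfies (\ref{eq:eq1}) and (\ref{eq:eq2}) for $(\hat G,z,\hat p,\ell)$, then the new lattice sum equals the old one plus $\gamma$ times the left-hand side of (\ref{eq:eq1}) at $i$, hence is zero. So you should replace the claim "the lattice sum is a switching-invariant" by "the lattice sum is invariant modulo the vertex equilibrium equations"; with that correction your argument coincides with the paper's proof.
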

\begin{proof}
We already checked that $R(\hat{G},z,\hat{p},\ell)$ is invariant by edge inversion.
It suffices to prove the statement when $i \in \hat{V}$ is replaced by $i+\gamma \in \hat{V}'$ and other representative vertices are the same.
Observe that $\omega$ is in the left kernel of $R(\hat{G},z,\hat{p},\ell)$ if and only if
\begin{align}
    &-\sum_{e \in \delta_{\text{out}}(i)} \omega(e) v_e + \sum_{e \in \delta_{\text{in}}(i)} \omega(e) v_e = 0&  (i \in \hat{V}), \label{eq:eq1} \\
    &\sum_{e\in \hat{E}^\circ} \omega(e)z(e)v_e = 0, \label{eq:eq2}
\end{align}
where $\delta_{\text{out}}(i)$ (resp. $\delta_{\text{in}}(i)$) is the set of edges outgoing from (resp. incoming to) $i$ in $\hat{G}$ (we assume that selfloops incident to $i$ are not contained in $\delta_{\text{out}}(i)$ nor $\delta_{\text{in}}(i)$). Suppose that $\omega$ satisfies (\ref{eq:eq1}) and (\ref{eq:eq2}).
As $v_e$ is independent of the choice of representative vertices, (\ref{eq:eq1}) remains to hold.
(\ref{eq:eq2}) for $(\hat{G}',z',\hat{p}',\ell)$ becomes
\begin{align*}
    &\sum_{e\in \hat{E}^\circ -\delta_{\text{out}}(i)-\delta_{\text{in}}(i)} \omega(e)z(e)v_e + \sum_{e\in \delta_{\text{out}}(i)} \omega(e)(z(e)-\gamma)v_e + \sum_{e\in \delta_{\text{in}}(i)} \omega(e)(z(e)+\gamma)v_e \\
    =&\sum_{e\in \hat{E}^\circ} \omega(e)z(e)v_e + \gamma \left(-\sum_{e \in \delta_{\text{out}}(i)} \omega(e) v_e + \sum_{e \in \delta_{\text{in}}(i)} \omega(e) v_e\right),
\end{align*}
which is $0$, so (\ref{eq:eq2}) remains to hold. Thus we have confirmed the statement.
\end{proof}
Motivated by Lemma~\ref{lem:eq}, a map $\omega:(E \cup E_L)/\mathbb{Z} \rightarrow \mathbb{R}$ is called an \emph{equilibrium stress} of $(G,p)$ if, for some choice of representative vertices, it is in the left kernel of $R(\hat{G},z,\hat{p},\ell)$ when considered as a vector in $\mathbb{R}^{\hat{E}^\circ}$.
We remark that the equation (\ref{eq:eq1}) corresponds to an ordinary equilibrium condition if we interpret it in the periodic framework, and the equation (\ref{eq:eq2}) seems a new relation which is special in the periodic setting.

Let $(G,p)$ be a periodic framework and $(\hat{G},z)$ be a quotient $\mathbb{Z}$-labelled graph of $G$.
For an edge $e \in \hat{E}^\circ$, let $F_e:=i_ei_e^\top \in \mathcal{S}^{n+1}$. As $i_e$ is orthogonal to $\bm{1}_{n+1}'$, $F_e \in \mathcal{L}^{n+1}$. $F_e$ is invariant by edge inversion.
For an equilibrium stress $\omega:(E \cup E_L)/\mathbb{Z} \rightarrow \mathbb{R}$ of $(G,p)$, $L_{\hat{G},z,\omega}:=\sum_{e \in \hat{E}^\circ} \omega(e)F_e$ is called an \emph{equilibrium stress matrix} of $(G,p)$.
Although equilibrium stress matrix changes by switching, the following lemma shows that the signature of it is invariant.
\begin{lemma}
    Let $(\hat{G},z)$, $(\hat{G}',z')$ be quotient $\mathbb{Z}$-labelled graphs of a $\mathbb{Z}$-symmetric graph $G$.
    Then for $\omega:(E \cup E_L)/\mathbb{Z} \rightarrow \mathbb{R}$, the signatures of $L_{\hat{G},z,\omega}$ and $L_{\hat{G}',z',\omega}$ coincide.
\end{lemma}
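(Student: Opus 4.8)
The plan is to show that changing the representative vertex set transforms the equilibrium stress matrix by a congruence $L\mapsto QLQ^\top$ with $Q$ invertible, so that the signature is preserved by Sylvester's law of inertia. First I would reduce to an elementary case. Since $\hat{E}^\circ$ (equivalently $(E\cup E_L)/\mathbb{Z}$) and the function $\omega$ on it are intrinsic to $G$, and since $F_e=i_ei_e^\top$ is invariant under edge inversion (as already noted), it suffices to treat the case where $\hat{V}'$ is obtained from $\hat{V}$ by replacing a single representative vertex $i$ by $i+\gamma$ for some $\gamma\in\mathbb{Z}$, leaving all other representatives fixed; a general change of representative set is a composition of such single-orbit replacements together with edge inversions, and it is enough to track how each summand $F_e$ of $L_{\hat{G},z,\omega}=\sum_{e\in\hat{E}^\circ}\omega(e)F_e$ changes.

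For the single-orbit replacement I would exhibit an explicit $Q\in\mathbb{R}^{(n+1)\times(n+1)}$, namely $Q:=I_{n+1}+\gamma\,\bm{e}_{n+1}\bm{e}_i^\top$, and check that the new indicator vector of every $e\in\hat{E}^\circ$ equals $Qi_e$. This is a short case analysis: replacing $i$ by $i+\gamma$ shifts the label of each non-loop edge at $i$ by $\mp\gamma$ according to whether it points out of or into $i$, while the $i$-th coordinate of $i_e$ is correspondingly $\mp 1$, so in both cases $i_e\mapsto i_e+\gamma(\bm{e}_i^\top i_e)\bm{e}_{n+1}=Qi_e$; selfloops at $i$ have label and $i$-th coordinate unchanged, the lattice loop $e_L$ and edges not incident to $i$ are untouched, and $Q$ fixes $\bm{e}_{n+1}$ and every $\bm{e}_k$ with $k\notin\{i,n+1\}$, so $i_e\mapsto Qi_e$ holds there too. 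Consequently each $F_e=i_ei_e^\top$ becomes $Qi_ei_e^\top Q^\top=QF_eQ^\top$, hence $L_{\hat{G}',z',\omega}=Q\,L_{\hat{G},z,\omega}\,Q^\top$.

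It then remains only to observe that $Q$ is invertible: since $\bm{e}_i^\top\bm{e}_{n+1}=0$ we get $\det Q=1$, and indeed $Q^{-1}=I_{n+1}-\gamma\,\bm{e}_{n+1}\bm{e}_i^\top$. Thus $L_{\hat{G},z,\omega}$ and $L_{\hat{G}',z',\omega}$ are congruent, and so have the same signature. The argument carries no real obstacle beyond careful bookkeeping: one must match the orientation-dependent sign of $\gamma$ in $Q$ to the convention for the switching move, and confirm that this one matrix $Q$ works uniformly for outgoing edges, incoming edges, selfloops at $i$, and the lattice loop $e_L$. (This mirrors, on the level of stress matrices, the fact established just above for the rigidity matrix under change of representative vertices.)
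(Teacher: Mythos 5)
Your argument is correct and is essentially the paper's proof: the paper writes $L_{\hat{G},z,\omega}=I(\hat{G},z)^\top\diag(\omega)I(\hat{G},z)$ and observes that a switching adds $\gamma$ times the $i$th column of the incidence matrix to the last column, which is exactly right-multiplication by your matrix $Q^\top$, so both proofs reduce the change of representatives (plus edge inversions) to a congruence and invoke Sylvester's law of inertia. Your version just makes the congruence explicit on each $F_e$ rather than via the incidence-matrix factorization.
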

\begin{proof}
We have checked that $L_{\hat{G},z,\omega}$ is invariant by edge inversion.
Define the incidence matrix $I(\hat{G},z) \in \mathbb{R}^{|\hat{E}^\circ| \times (n+1)}$ of $(\hat{G},z)$ as a matrix whose row indexed by $e \in \hat{E}^\circ$ is $i_e^\top$.
Then we have $L_{\hat{G},z,\omega}=I(\hat{G},z)^\top\diag(\omega)I(\hat{G},z)$.
In terms of an incidence matrix, a switching of $i$ by $\gamma$ results in adding $\gamma$ times $i$th column of incidence matrix to the last column.
Hence, by Sylvester's law of inertia, the signature is invariant by switching.
\end{proof}
We define the \emph{stress signature} $(n_+,n_-,n_0)$ of an equilibrium stress $\omega:(E\cup E_L)/\mathbb{Z} \rightarrow \mathbb{R}$ of a periodic framework $(G,p)$ to be the common signature of $L_{\hat{G},z,\omega}$, where $n_+$ (resp. $n_-$, $n_0$) is the number of positive (resp. negative, zero) eigenvalues of $L_{\hat{G},z,\omega}$.
Since $\omega^\top R(\hat{G},z,\hat{p},\ell)=0$ if and only if $\begin{pmatrix}\hat{P} & \ell \end{pmatrix}L_{\hat{G},z,\omega}=0$, $\bm{1}_{n+1}'$ and the row vectors of $\begin{pmatrix}\hat{P} & \ell \end{pmatrix}$ are always in the kernel of $L_{\hat{G},z,\omega}$. So $n_0$ of the stress signature of an equilibrium stress of $d$-dimensional periodic framework is at least $d+1$.
A stress signature of an equilibrium stress of $d$-dimensional periodic framework is \emph{nonnegative} if $n_-=0$, and it is \emph{full} if $n_0=d+1$.
We are now ready to define periodic super stability.
\begin{defi} \label{defi:ss}
    A periodic framework $(G,p)$ in $\mathbb{R}^d$ is \emph{periodically super stable} if the following conditions hold:
    \begin{itemize}
        \item[(i)] It has an equilibrium stress whose stress signature is nonnegative and full.
        \item[(ii)] $(G,p)$ satisfies the conic condition.
    \end{itemize}
\end{defi}
Note that by (ii), a periodically super stable framework is affine full dimensional.

\begin{eg}
Consider the $2$-dimensional periodic framework $(G,p)$ in Figure~\ref{fig:stress} (a).
If $\hat{V}$ is the set of white vertices in Figure~\ref{fig:stress} (a), Figure~\ref{fig:stress} (b) is the quotient graph of $G$ with respect to $\hat{V}$ with the additional selfloop $\{e_L\}$.
An equilibrium stress $\omega:(E \cup E_L)/\mathbb{Z} \rightarrow \mathbb{R}$ of $(G,p)$ is shown in Figure~\ref{fig:stress} (c), in which a number written on each edge shows a weight on the edge orbit it belongs to.
Then the equilibrium stress matrix is 
\[
    \kbordermatrix{%
    &1&2&3&L\\
    1& 1 & 1 & -2 & 1 \\
    2& 1 & 1 & -2 & 1 \\
    3& -2&-2 &  4 &-2 \\
    L& 1 & 1 & -2 & 1 \\
    },
\]
which is positive semidefinite with rank $1$. Hence the stress signature is $(n_+,n_-,n_0)=(1,0,3)$, which is nonnegative and full. As $(G,p)$ has at least three edge directions, it satisfies the conic condition. Hence the periodic framework $(G,p)$ in Figure~\ref{fig:stress} (a) is periodically super stable.
\begin{figure}
\centering
\begin{tikzpicture}
    \tikzset{vertex/.style={draw,fill=black,circle,inner sep=2pt,minimum size=3pt}}
    \tikzset{wvertex/.style={draw,fill=white,circle,inner sep=2pt,minimum size=3pt}}
    \node[vertex] (a1) at (0,0) {};
    \node[wvertex] (a2) at (4,0) {};
    \node[vertex] (a3) at (8,0) {};
    \node[vertex] (b1) at (0,2) {};
    \node[wvertex] (b2) at (4,2) {};
    \node[vertex] (b3) at (8,2) {};
    \node[vertex] (c1) at (2,1) {};
    \node[wvertex] (c2) at (6,1) {};
    \node (x1) at (-1,0) {};
    \node (x2) at (-1,0.5) {};
    \node (x3) at (-1,1) {};
    \node (x4) at (-1,1.5) {};
    \node (x5) at (-1,2) {};
    \node (y1) at (9,0) {};
    \node (y2) at (9,0.5) {};
    \node (y3) at (9,1) {};
    \node (y4) at (9,1.5) {};
    \node (y5) at (9,2) {};  
    \draw (a1) to (b1); \draw (a1) to (c1); \draw (b1) to (c1); \draw (a2) to (b2); \draw (a2) to (c2); \draw (b2) to (c2); \draw (a3) to (b3);
    \draw (c1) to (a2); \draw (c1) to (b2); \draw (c2) to (a3); \draw (c2) to (b3);
    \draw (a1) to (x2); \draw (b1) to (x4);
    \draw (a3) to (y2); \draw (b3) to (y4); 
    \node (cap) at (3,-0.5) {(a)};
    
    \node (x0) at (-3,0) {};
    \node (x1) at (-1,0) {}; \node (name_x) at (-0.8,0) {$x$};
    \node (y0) at (-2.5,-0.5) {};
    \node (y1) at (-2.5,1.5) {}; \node (name_y) at (-2.5,1.7) {$y$};
    \draw[->,>=latex] (x0) to (x1);
    \draw[->,>=latex] (y0) to (y1);
\end{tikzpicture}
\\
\begin{tikzpicture}
    \tikzset{vertex/.style={draw,fill=black,circle,inner sep=2pt,minimum size=3pt}}
    \node[vertex] (a) at (0,0) {}; \node (name_a) at (-0.3,0) {$1$};
    \node[vertex] (b) at (0,2) {}; \node (name_b) at (-0.3,2) {$2$};
    \node[vertex] (c) at (2,1) {}; \node (name_c) at (2.3,1) {$3$};
    \draw[->,>=latex] (a) to node [midway,left] {$+0$} (b);
    \draw[->,>=latex] (c) to [bend left=20] node [midway,below] {$+1$} (a);
    \draw[->,>=latex] (c) to [bend right=20] node [midway,below] {$+0$} (a);
    \draw[->,>=latex] (c) to [bend right=20] node [midway,above] {$+1$} (b);
    \draw[->,>=latex] (c) to [bend left=20] node [midway,above] {$+0$} (b);
    \node (cap) at (1,-0.5) {(b)};

    \node[vertex] (l) at (1.8,2.5) {}; \node (name_L) at (1.8,2.2) {$L$};
    \draw[->,>=latex] (l) to [loop above] node [midway,right] {$+1$} node [midway,left] {$e_L$} (l);
\end{tikzpicture}
\qquad
\begin{tikzpicture}
    \tikzset{vertex/.style={draw,fill=black,circle,inner sep=2pt,minimum size=3pt}}
    \node[vertex] (a1) at (0,0) {};
    \node[vertex] (a2) at (4,0) {};
    \node[vertex] (a3) at (8,0) {};
    \node[vertex] (b1) at (0,2) {};
    \node[vertex] (b2) at (4,2) {};
    \node[vertex] (b3) at (8,2) {};
    \node[vertex] (c1) at (2,1) {};
    \node[vertex] (c2) at (6,1) {};
    \node (x1) at (-1,0) {};
    \node (x2) at (-1,0.5) {};
    \node (x3) at (-1,1) {};
    \node (x4) at (-1,1.5) {};
    \node (x5) at (-1,2) {};
    \node (y1) at (9,0) {};
    \node (y2) at (9,0.5) {};
    \node (y3) at (9,1) {};
    \node (y4) at (9,1.5) {};
    \node (y5) at (9,2) {};  
    \draw (a1) to node [midway,right] {$-1$} (b1); \draw (a1) to node [midway,below] {$1$} (c1); \draw (b1) to node [midway,above] {$1$} (c1); \draw (a2) to node [midway,right] {$-1$} (b2); \draw (a2) to node [midway,below] {$1$} (c2); \draw (b2) to node [midway,above] {$1$} (c2); \draw (a3) to node [midway,right] {$-1$} (b3);
    \draw (c1) to node [midway,above] {$1$} (a2); \draw (c1) to node [midway,below] {$1$} (b2); \draw (c2) to node [midway,above] {$1$} (a3); \draw (c2) to node [midway,below] {$1$} (b3);
    \draw (a1) to (x2); \draw (b1) to (x4);
    \draw (a3) to (y2); \draw (b3) to (y4); 
    \node (cap) at (4,-0.5) {(c)};

    \node (name_E0) at (-1.3,2.5) {$E_L$};
    \node (l1) at (-1,2.5) {};
    \node[vertex] (l2) at (0,2.5) {};
    \node[vertex] (l3) at (4,2.5) {};
    \node[vertex] (l4) at (8,2.5) {};
    \node (l5) at (9,2.5) {};
    \draw (l1) to (l2);
    \draw (l2) to node [midway,above] {$-1$} (l3);
    \draw (l3) to node [midway,above] {$-1$} (l4);
    \draw (l4) to (l5);
\end{tikzpicture}
\caption{(a) A $2$-dimensional periodic framework $(G,p)$ with a representative vertex set $\hat{V}$ denoted by white circles. (b) The quotient graph of $G$ with respect to $\hat{V}$ with the additional selfloop $\{e_L\}$. (c) An equilibrium stress $\omega$ of $(G,p)$.}.
\label{fig:stress}
\end{figure}
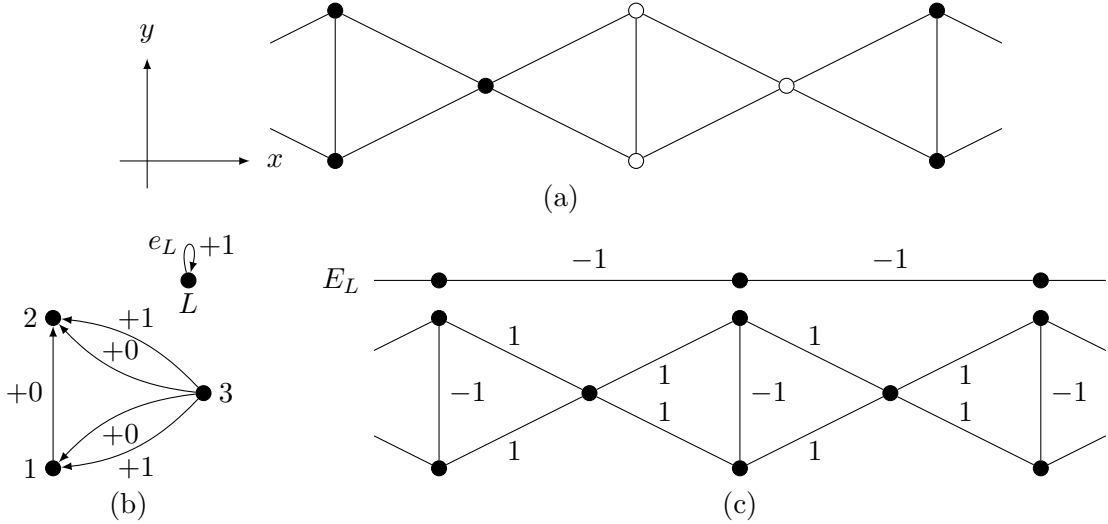
\end{eg}
A periodic framework $(G,p)$ is \emph{periodically universally rigid} if every periodic framework $(G,q)$ in any dimension equivalent to $(G,p)$ is congruent to $(G,p)$. 
We have the following periodic analogue of~\cite{connelly1982rigidity}. A variant of the following proposition for quotient frameworks has been shown in an unpublished manuscript~\cite{sean}.
\begin{prop} \label{prop:superstable}
    A periodically super stable framework is periodically universally rigid.
\end{prop}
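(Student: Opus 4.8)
The plan is to mimic Connelly's classical argument (the one-page proof via the semidefinite / energy functional viewpoint as in~\cite{SY07}), carefully lifting it to the periodic setting so that the extra lattice coordinate and the extra equilibrium relation~(\ref{eq:eq2}) are accounted for. Fix a periodically super stable framework $(G,p)$ in $\mathbb{R}^d$, a representative vertex set $\hat V$, and the corresponding quotient framework $(\hat G,z,\hat p,\ell)$; let $\omega$ be the equilibrium stress whose stress signature is nonnegative and full, and set $L=L_{\hat G,z,\omega}\in\mathcal L^{n+1}_+$, so $L\succeq O$, $\rank L = n-d$, and $\ker L = \spa\{\bm 1'_{n+1}\}\cup\text{(row space of }(\hat P\ \ell)\text{)}$, of dimension exactly $d+1$ by fullness. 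Let $(G,q)$ be any periodic framework equivalent to $(G,p)$, say in $\mathbb{R}^{d''}$, with quotient data $(\hat G,z,\hat q,\ell')$ where $\|\ell'\|=\|\ell\|$; we must show $(G,q)$ is congruent to $(G,p)$.

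First I would set up the energy inequality. For the edge set $\hat E^\circ$ with weights $\omega$, consider the quadratic form $E_\omega(r,m):=\sum_{e\in\hat E^\circ}\omega(e)\,\bigl\|(\hat R\ \ m)\,i_e\bigr\|^2$ on pairs (configuration matrix $\hat R$, lattice vector $m$); a direct expansion shows $E_\omega(\hat R,m)=\langle L,\,(\hat R\ m)^\top(\hat R\ m)\rangle = \tr\bigl((\hat R\ m)L(\hat R\ m)^\top\bigr)\ge 0$ since $L\succeq O$, with equality iff $(\hat R\ m)L=0$, i.e. iff every row of $(\hat R\ m)$ lies in $\ker L$. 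Because $\omega$ is an equilibrium stress of $(G,p)$ (equations (\ref{eq:eq1}),(\ref{eq:eq2})), the value $E_\omega(\hat P,\ell)=\sum_e\omega(e)\|v_e\|^2$ equals $0$ — this is exactly the statement $(\hat P\ \ell)L=0$ recorded before Definition~\ref{defi:ss}. Now I would use equivalence: $\|v_e\|$ for $(G,p)$ equals the corresponding edge-vector length for $(G,q)$ for every $e\in\hat E$, and the lattice lengths agree, so $\sum_e\omega(e)\|v^q_e\|^2=\sum_e\omega(e)\|v_e\|^2=0$. Hence $E_\omega(\hat Q,\ell')=0$ as well, and by the equality case every row of $(\hat Q\ \ell')$ lies in $\ker L=\ker L_{\hat G,z,\omega}$.

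Next, translate this kernel membership into an affine map. Since $\ker L$ is spanned by $\bm 1'_{n+1}$ together with the $d$ rows of $(\hat P\ \ell)$, each row of $(\hat Q\ \ell')$ is an affine combination: there exist a matrix $A\in\mathbb{R}^{d''\times d}$ and a vector $t\in\mathbb{R}^{d''}$ with $\hat q(i)=A\hat p(i)+t$ for all $i\in\hat V$ and $\ell'=A\ell$ (the last-column bookkeeping forces the translation part to act only on the $\hat p$-block, not on $\ell$, because the $(n+1)$-st coordinate of $\bm 1'_{n+1}$ is $0$). Thus $(G,q)$ is an affine image of $(G,p)$. From equivalence we get, for every edge $uv$ and for the lattice vector, $(p(u)-p(v))^\top(A^\top A-I_d)(p(u)-p(v))=0$ and $\ell^\top(A^\top A-I_d)\ell=0$; i.e. $S:=A^\top A-I_d\in\mathcal S^d$ satisfies~(\ref{eq:conic}). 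The conic condition (ii) now forces $S=O$, so $A^\top A=I_d$, i.e. $A$ is an isometric embedding $\mathbb{R}^d\hookrightarrow\mathbb{R}^{d''}$. Composing with a rigid motion of $\mathbb{R}^{d''}$ we may take $A$ to be the inclusion, and then $\hat q(i)=\hat p(i)+t$, $\ell'=\ell$, so $(G,q)$ is congruent to $(G,p)$ as periodic frameworks.

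The step I expect to be the main obstacle — and the one genuinely new to the periodic setting — is the bookkeeping that makes the energy form $E_\omega$ factor cleanly as $\langle L,(\hat R\ m)^\top(\hat R\ m)\rangle$ including the selfloop $e_L$ and the $z(e)\ell$ terms in the edge vectors, and the verification that the equilibrium equations~(\ref{eq:eq1}),(\ref{eq:eq2}) are precisely the gradient conditions that kill the first-order and lattice derivatives of $E_\omega$ at $(\hat P,\ell)$. Concretely one must check $i_e^\top\bm1'_{n+1}=0$ for all $e\in\hat E^\circ$ (true since $\bm e_j-\bm e_i$ sums to zero and the $(n+1)$-st entry of $\bm1'_{n+1}$ is $0$), so that $\ker L\supseteq\spa\{\bm1'_{n+1}\}$ automatically and the affine-combination reading above is valid; and one must confirm that switching invariance of the stress signature (proved in the lemma before Definition~\ref{defi:ss}) lets the whole argument be run with respect to any chosen $\hat V$. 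The remaining steps are the standard positive-semidefinite equality-case and affine-rigidity computations, carried over verbatim from the finite case~\cite{connelly1982rigidity,connelly2012tensegrities,SY07}.
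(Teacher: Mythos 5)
Your argument is correct and complete, and there is essentially nothing in the paper to compare it with: Proposition~\ref{prop:superstable} is stated without proof (the paper defers to the unpublished manuscript~\cite{sean}), and your energy argument --- using equivalence and the fixed lattice length to get $0=\sum_{e\in\hat{E}^\circ}\omega(e)\|v_e^q\|^2=\big\langle L_{\hat{G},z,\omega},\begin{pmatrix}\hat{Q} & \ell'\end{pmatrix}^\top\begin{pmatrix}\hat{Q} & \ell'\end{pmatrix}\big\rangle$, concluding from positive semidefiniteness that the rows of $\begin{pmatrix}\hat{Q} & \ell'\end{pmatrix}$ lie in $\ker L_{\hat{G},z,\omega}$, hence $q=Ap+t$ with $\ell'=A\ell$, and then letting the conic condition kill $S=A^\top A-I_d$ --- is precisely the standard Connelly/SDP route the paper intends, correctly transplanted to the periodic setting with the extra lattice coordinate and the selfloop $e_L$. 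The one step you should make explicit is that identifying $\ker L_{\hat{G},z,\omega}$ with the span of $\bm{1}_{n+1}'$ and the rows of $\begin{pmatrix}\hat{P} & \ell\end{pmatrix}$ requires these $d+1$ vectors to be linearly independent, i.e.\ that $(G,p)$ is affine full dimensional; ``fullness'' of the stress signature alone only gives $\dim\ker L_{\hat{G},z,\omega}=d+1$, and the missing independence is supplied by the remark after Definition~\ref{defi:ss} that the conic condition forces full affine dimension.
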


\section{Obstacles}
For a directed multigraph $D$ and two vertices $i,j$ of $D$, the \emph{multiplicity} between $i$ and $j$ is the number of edges between $i$ and $j$ when the edge direction is forgotten.
For a directed multigraph $D$, let $\si(D)$ be the simple loopless undirected graph obtained from $D$ by removing selfloops and forgetting the direction and the multiplicity.
\subsection{Relationship with finite frameworks} \label{subseq:finite}
The first observation for constructing obstacles is done by comparing the realizable dimension of a $\mathbb{Z}$-labelled graph $(\hat{G},z)$ and the realizable dimension of a finite graph $\si(\hat{G})$.
We have the following.
\begin{prop} \label{prop:finite}
    For a $\mathbb{Z}$-labelled graph $(\hat{G},z)$, $\rd(\hat{G},z) \geq \rd(\si(\hat{G}))$.
\end{prop}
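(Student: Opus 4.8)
The goal is to show that if a finite framework $(\si(\hat G), \bar p)$ in some $\mathbb{R}^D$ has no equivalent framework in $\mathbb{R}^d$ with $d < \rd(\si(\hat G))$... wait, more precisely: we must show $\rd(\hat G, z) \ge \rd(\si(\hat G))$, i.e., that a $\mathbb{Z}$-symmetric graph is at least as hard to realize as its finite "shadow". The plan is to start from a finite framework $(\si(\hat G), \bar p)$ witnessing the lower bound for $\rd(\si(\hat G))$ — that is, a framework in some dimension $D$ that admits no equivalent finite framework of affine dimension below $D-1 \ge \rd(\si(\hat G))-1$... actually cleaner: take a finite framework whose every equivalent realization has affine dimension $\ge \rd(\si(\hat G))$. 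From it I would build a periodic framework whose quotient $\mathbb{Z}$-labelled graph is (a graph with the same simplification as) $(\hat G, z)$, set up so that periodic equivalence of the lift forces finite equivalence of the finite shadow, transferring the dimension lower bound.

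Here is the construction I would use. Given a finite framework $(\si(\hat G), \bar p)$ in $\mathbb{R}^D$, embed $\mathbb{R}^D$ into $\mathbb{R}^{D+1}$ as $\mathbb{R}^D \times \{0\}$, pick the lattice vector $\ell = \be_{D+1}$ (the extra coordinate direction), and define a periodic point configuration on the lift graph $G$ of $(\hat G, z)$ by placing the representative vertex $i \in \hat V$ at $(\bar p(i), c_i)$ for suitably chosen scalars $c_i$, and extending periodically: $p(i + \gamma) = (\bar p(i), c_i + \gamma)$. Every edge $(i,j;z(e))$ of $(\hat G, z)$ then has edge vector $(\bar p(j) - \bar p(i),\, c_j - c_i + z(e))$, whose squared length is $\|\bar p(j)-\bar p(i)\|^2 + (c_j - c_i + z(e))^2$. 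One should choose the $c_i$ generically (or even all equal to $0$), so that the "vertical" contributions are fixed constants. The key point: in any equivalent periodic framework $(G, q)$, the lattice vector has the same length $1$, and by an affine normalization (quotienting out by congruence and using Proposition~\ref{prop:flex} if the conic condition fails) one can assume the lattice direction is a fixed coordinate axis; then the edge-length equations, after subtracting the fixed vertical contributions, become exactly the edge-length equations of the finite framework on $\si(\hat G)$. So a low-affine-dimensional periodic realization of $(G,q)$ yields a low-affine-dimensional finite realization of $(\si(\hat G), \bar q)$, contradicting the choice of $(\si(\hat G),\bar p)$.

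The steps in order: (1) recall/choose a finite framework $(\si(\hat G), \bar p)$ in $\mathbb{R}^{d}$ with $d = \rd(\si(\hat G))$ admitting no equivalent framework of smaller affine dimension (this exists by definition of $\rd$ of a finite graph, possibly after noting that we may take $\bar p$ with $\aff$-dimension exactly $d$); (2) lift it to a periodic framework $(G, p)$ in $\mathbb{R}^{d+1}$ with lattice vector the last coordinate axis and the finite configuration sitting in the first $d$ coordinates at height $0$, noting the periodic edge-length function decomposes as (finite edge-length) $+\, z(e)^2$; (3) given any equivalent periodic $(G,q)$ of affine dimension $d'$, project out the lattice direction to obtain a finite configuration $\bar q$ on $\si(\hat G)$: here one uses that the lattice vector of $q$ has unit length, so writing $q(i+\gamma) = q(i) + \gamma \ell_q$ and decomposing $q(i) = \bar q(i) + t_i \ell_q / \|\ell_q\|$ along and orthogonal to $\ell_q$, the edge-length equation for $(i,j;z(e))$ reads $\|\bar q(j) - \bar q(i)\|^2 + (t_j - t_i + z(e))^2 = \|\bar p(j)-\bar p(i)\|^2 + z(e)^2$; (4) argue that one can further assume $t_j - t_i = 0$ for all edges — this is the delicate point — so that $\bar q$ is equivalent to $\bar p$ as finite frameworks while $\aff(\bar q) \le d' - 1 < d$, a contradiction; (5) conclude $\rd(\hat G, z) \ge d+1 > d$...

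Hmm, wait — that would give $\rd(\hat G,z) \ge \rd(\si(\hat G)) + 1$, which is too strong and presumably false. Let me reconsider: the vertical offsets $t_i$ genuinely add a dimension, so the correct bookkeeping must allow the periodic realization to "spend" one dimension on the lattice direction for free. So in step (4) I should NOT expect $t_j - t_i = 0$; rather, the honest statement is that the finite shadow $\bar q$ lives in affine dimension $\le d' - 1$ is wrong too. The right approach: choose the $c_i$ (and a generic $\bar p$) so that the conic condition is satisfied by $(G,p)$ and then... Actually the clean route is: if $(\hat G, z)$ were $(d-1)$-realizable, take the finite framework $(\si(\hat G), \bar p)$ of affine dimension exactly $d$, lift it to the $d$-dimensional periodic framework by choosing $\ell$ to lie in $\aff(\bar p(V)) - \aff(\bar p(V))$ generically (so $\ell$ is a generic vector of $\mathbb{R}^d$, not an added coordinate), and set $p(i+\gamma) = \bar p(i) + \gamma \ell$. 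Then $(G,p)$ is a $d$-dimensional periodic framework, and $(d-1)$-realizability gives an equivalent periodic $(G,q)$ of affine dimension $\le d-1$; its quotient framework $(\hat G, z, \hat q, \ell_q)$ has $\hat q$ of affine dimension $\le d-1$, and the edge-length equations on $\hat E$ say precisely that $\hat q$ is a finite realization of $\si(\hat G)$ equivalent to... not quite $\bar p$, because the $\ell_q$-terms shift lengths. The fix: the selfloop-free edges of $(\hat G,z)$ with $z(e) = 0$ directly give $\|\hat q(j) - \hat q(i)\| = \|\bar p(j)-\bar p(i)\|$; for $z(e) \ne 0$ one uses that, after a switching, any single edge's label can be made $0$, and $\si(\hat G)$ only records the underlying simple graph, so by choosing switchings appropriately one reduces to the balanced-labelling case for a spanning subgraph — giving a finite framework on $\si(\hat G)$ (or a spanning connected subgraph thereof, enough since realizable dimension is about the whole graph) equivalent to a finite realization of $\si(\hat G)$ of affine dimension $\le d - 1$, contradicting $\rd(\si(\hat G)) = d$.

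**The main obstacle** I expect is step (4)/the reconciliation of labels: making the edge-length equations of the periodic lift match *exactly* those of a genuine finite framework on $\si(\hat G)$, rather than a perturbed version with extra $z(e)$-dependent or $\ell_q$-dependent terms. The cleanest fix is probably to restrict attention to a spanning tree of $\si(\hat G)$ (on which all labels can be simultaneously switched to $0$), realize the finite obstruction framework with generic edge lengths so that the non-tree edges impose no new constraints beyond those forcing full affine dimension, and invoke that $\rd$ is determined by a single worst-case framework. I would also need to double-check the boundary behavior when $\si(\hat G)$ has $\rd(\si(\hat G)) = 0$ (trivial) and confirm the reduction is consistent with the known lists $\{K_3\}, \{K_4\}$ appearing in Theorem~\ref{thm:main}, which is a good sanity check that the inequality is the right one.
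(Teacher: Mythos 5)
Your write-up correctly identifies the crux --- making the periodic edge-length equations match the finite ones on $\si(\hat{G})$ --- but it never resolves it, and the fixes you sketch do not work. In your ``clean route'' you lift a finite framework $(\si(\hat{G}),\bar p)$ by $p(i+\gamma)=\bar p(i)+\gamma\ell$ with a nonzero $\ell$; then an edge $(i,j;z(e))$ has length $\|\bar p(j)+z(e)\ell-\bar p(i)\|$, and in any equivalent periodic framework the constraint reads $\|\hat q(j)+z(e)\ell_q-\hat q(i)\|=\|\bar p(j)+z(e)\ell-\bar p(i)\|$, which is not a constraint of the finite graph $\si(\hat{G})$ unless $z(e)=0$. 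Your proposed repair --- switch labels to $0$ and pass to a spanning subgraph --- fails on both counts: switchings can only zero out the labels of a maximal forest (not of all edges of an unbalanced graph), and discarding the non-forest edges destroys the intended contradiction, since a forest is $1$-realizable and its frameworks say nothing about $\rd(\si(\hat{G}))$. The earlier variant (extra coordinate for the lattice, then ``assume the lattice direction is a fixed coordinate axis'' and hope the offsets $t_i$ vanish) is, as you yourself note, unjustified; congruences and Proposition~\ref{prop:flex} give no control over the components of $q$ along $\ell_q$, so step (4) is a genuine gap, not a detail.

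The missing idea is the one the paper uses: a finite framework of $\si(\hat{G})$ is exactly a degenerate ``periodic'' configuration with lattice vector $\ell=0$ (then all the $z(e)\ell$ terms disappear and the quotient length equations become the finite ones), but $\ell=0$ is not admissible for a periodic framework, so one cannot apply $d$-realizability directly. The paper handles this by a limit argument: for the edge-length map $f_d(\hat p,\ell)=(\|v_e\|^2)_{e\in\hat E^\circ}$ (which records $\|\ell\|^2$ via $e_L$), it proves that $\Im f_d$ is closed (Lemma~\ref{lem:closed}, via translating each component and a compactness argument), notes that the nonzero-lattice image is dense in $\Im f_{d'}$, and concludes $\Im f_{d'}\subseteq\Im f_d$ from $d$-realizability. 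Feeding in $(p,0)$ for a finite framework and reading off $\|\ell\|^2=0$ in the preimage then yields an equivalent finite framework in $\mathbb{R}^d$. Without this closure/limit step (or some substitute for it), your argument does not go through.
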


The idea of the proof of Proposition~\ref{prop:finite} is simple. If $(\si(\hat{G}),p)$ is a framework in some dimension, we hypothetically regard this as a periodic framework $(G,p)$ with zero lattice vector, where $G$ is a lift graph of $(\hat{G},z)$, and use the realizability of $G$ to find a lower dimensional finite framework equivalent to $(\si(\hat{G}),p)$. We make this argument more rigorous. 

Let $(\hat{G},z)$ be a $\mathbb{Z}$-labelled graph. 
For each nonnegative integer $d$, consider a map $f_d:\mathbb{R}^{dn}\times\mathbb{R}^{d} \rightarrow \mathbb{R}^{\hat{E}^\circ}$ defined by $f_d(\hat{p},\ell):=(\|v_e\|^2)_{e \in \hat{E}^\circ}$. Let $\mathbb{R}^d_{\neq 0}:=\mathbb{R}^d\setminus \{0\}$.
As the lattice vector of a periodic framework is a nonzero vector, we have 
\begin{equation*}
\{(\hat{p},\ell):(\hat{G},z,\hat{p},\ell) \text{ is a quotient framework of a periodic framework in } \mathbb{R}^d\}=\mathbb{R}^{dn}\times\mathbb{R}^{d}_{\neq 0}.
\end{equation*}
Then $f_0(\mathbb{R}^{0}\times\mathbb{R}^{0}_{\neq 0}) \subseteq f_1 (\mathbb{R}^{n}\times\mathbb{R}^{1}_{\neq 0}) \subseteq \cdots$ is an ascending chain of subsets of $\mathbb{R}^{\hat{E}^\circ}$.
By definition, $(\hat{G},z)$ is $d$-realizable if and only if $f_d (\mathbb{R}^{dn}\times\mathbb{R}^{d}_{\neq 0}) = f_{d+1}(\mathbb{R}^{(d+1)n}\times\mathbb{R}^{d+1}_{\neq 0})=\cdots$ holds, and $\rd(\hat{G},z)$ equals to the minimum $d$ satisfying such relation.
The next lemma is a key. A similar argument can be found in \cite{belk2007realizability}.
\begin{lemma} \label{lem:closed}
    For every nonnegative integer $d$, $\Im f_{d}$ is closed.
\end{lemma}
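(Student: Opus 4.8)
The plan is to show that $\Im f_d$ is the image of a compact set under a continuous map, hence closed. The apparent obstacle is that the domain $\mathbb{R}^{dn}\times\mathbb{R}^d_{\neq 0}$ is neither compact nor closed: the lattice vector is required to be nonzero, and the positions of representative vertices are unbounded. I would handle both issues by normalizing. First, using translations and rotations — which preserve all edge vectors $v_e$ and the lattice vector length, hence do not change $f_d(\hat p,\ell)$ — one may assume the lattice vector $\ell$ equals a fixed multiple of a coordinate direction, or more precisely that $\ell$ ranges over a sphere of each fixed radius; but since only $\|v_e\|^2$ matters and scaling is not available, the cleaner route is to fix a target point $y \in \Im f_d$ and show $y$ is attained. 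So let $y$ be in the closure of $\Im f_d$; pick a sequence $(\hat p_k,\ell_k)$ with $f_d(\hat p_k,\ell_k) \to y$.

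The key step is a compactness/bounded-diameter argument. After applying an isometry, assume each $\hat p_k$ has its first representative vertex at the origin. The entries of $y$ bound, in the limit, all the quantities $\|v_e\|^2$ for $e \in \hat E^\circ$; in particular $\|\ell_k\|^2 = \|v_{e_L}\|^2$ converges, so the $\ell_k$ are bounded, and — crucially — if the limit of $\|\ell_k\|^2$ is $0$ we are in trouble, since the limiting configuration would have zero lattice vector and need not lie in $\Im f_d$. This is the genuine difficulty, and it is exactly the point where one must use the structure of $f_d$ rather than naive compactness. I expect to resolve it as in Belk--Connelly: if $\|\ell_k\| \to 0$, then the limiting data describe a \emph{finite} framework on the lift of a bounded piece, and one can \emph{perturb} the degenerate lattice — replace $\ell_k$ by a nonzero vector $\ell_k'$ orthogonal to the affine span of the $\hat p_k$ (or in a fresh dimension) of the correct small length — without changing any $\|v_e\|^2$, because each $v_e = \hat p(j) + z(e)\ell - \hat p(i)$ and the added component is orthogonal to everything relevant; choosing $\|\ell_k'\|$ to match $\sqrt{y_{e_L}}$ and taking limits then lands the target $y$ inside $\Im f_d$ (possibly after increasing $d$, but $\Im f_d \subseteq \Im f_{d+1}$ and one argues for the minimal working dimension). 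Once the lattice length is bounded away from $0$ along a subsequence, boundedness of all $\|v_e\|^2$ forces, via connectivity of $\si(\hat G)$ relative to a chosen spanning structure, boundedness of the $\hat p_k$ themselves.

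With boundedness in hand, extract a convergent subsequence $(\hat p_k,\ell_k) \to (\hat p_\infty,\ell_\infty)$ with $\ell_\infty \neq 0$; continuity of $f_d$ gives $f_d(\hat p_\infty,\ell_\infty) = y$, so $y \in \Im f_d$ and $\Im f_d$ is closed. The one subtlety to write carefully is why connectivity suffices to bound the representatives: a vertex orbit not reachable from the anchored one by a path in $\si(\hat G)$ lies in a separate component whose positions are free, but then that component contributes its own independent copy of the problem and can be translated to be bounded; alternatively one restricts attention componentwise from the start. I would state the argument assuming $\si(\hat G)$ connected (the general case reduces to it orbitwise) and foreground the lattice-perturbation trick as the one nonroutine ingredient, citing the analogous step in~\cite{belk2007realizability}.
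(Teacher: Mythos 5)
Your core compactness argument is the paper's: anchor one representative vertex of each connected component of $\si(\hat{G})$ at the origin using translation invariance of $f_d$, bound $\|\ell\|$ by the $e_L$-coordinate of the converging values and the remaining $\|\hat{p}(j)\|$ by the triangle inequality along paths of $\si(\hat{G})$, extract a convergent subsequence, and conclude by continuity. But there is a genuine confusion at the centre of your write-up: $f_d$ is defined on all of $\mathbb{R}^{dn}\times\mathbb{R}^{d}$, so $\Im f_d$ is the image of the \emph{full} domain, with $\ell=0$ allowed. The case $\|\ell_k\|\to 0$ that you single out as "the genuine difficulty" is therefore not a difficulty at all: the limit point $(\hat{p}_\infty,0)$ is a legitimate preimage and already certifies $y\in\Im f_d$. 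The restriction to nonzero lattice vectors is handled in the paper not in this lemma but in the proof of Proposition~\ref{prop:finite}, where one only needs that the closure of $f_{d'}(\mathbb{R}^{d'n}\times\mathbb{R}^{d'}\setminus\{0\})$ contains $\Im f_{d'}$, together with the closedness of $\Im f_d$.

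Worse, the "lattice-perturbation trick" you foreground does not work on its own terms, and cannot, because under your reading (image of the punctured domain) the statement is simply false: for a single vertex with no edges other than $e_L$, that image is $(0,\infty)$, which is not closed. Concretely, when $y_{e_L}=0$ there is no nonzero $\ell'$ with $\|\ell'\|=\sqrt{y_{e_L}}$; adding a component orthogonal to the point configuration does \emph{not} preserve $\|v_e\|^2$ for edges with $z(e)\neq 0$ (it adds $z(e)^2\|\ell'\|^2$); choosing $\ell'$ small and "taking limits" only shows $y$ lies in the closure of the image, which is circular; and "increasing $d$" proves membership in $\Im f_{d+1}$, not in the set $\Im f_d$ whose closedness is being asserted. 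Once you read the domain correctly and drop this apparatus, your remaining argument is essentially the paper's proof; the only small care needed is the one you already noted, that disconnected components are translated independently (the shared lattice vector is untouched), which is exactly what anchoring one vertex per component achieves.
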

\begin{proof}
    For brevity, we denote $f:=f_d$.
    Let $\{y_t\}_{t \in \mathbb{N}}$ be a sequence in $\Im f$ converging to $y$. We show that $y \in \Im f$.
    We claim that there exists a closed ball $B \subset \mathbb{R}^{dn}\times\mathbb{R}^{d}$ such that $B \cap f^{-1}(y_t) \neq \emptyset$ for all $t \in \mathbb{N}$.
    To see the claim, fix vertices $v_1,\ldots,v_k$ in each connected component of $\si(\hat{G})$. Note that for a vertex set $\hat{U}$ of a connected component of $\si(\hat{G})$, the value $f(\hat{p},\ell)$ is invariant under a translation of $\hat{p}|_{\hat{U}}$.
    Hence, there exists $(\hat{p},\ell) \in f^{-1}(y_t)$ such that $\hat{p}(v_1)=\cdots=\hat{p}(v_k)=0$.
    Then since $\|\ell\|$ is bounded and $\|\hat{p}(j)+z(e)\ell-\hat{p}(i)\|$ is also bounded for all $(i,j;e) \in \hat{E}$, one can deduce that $\|\hat{p}(j)\|$ is bounded for every $j \in \hat{V}$ by triangle inequality. Hence $(\hat{p},\ell)$ is contained in a ball whose radius is independent of $t$.
    Therefore $B \cap f^{-1}(y_t) \neq \emptyset$.

    The rest of the argument is immediate.
    For each $t \in \mathbb{N}$, take $(\hat{p}_t,\ell_t) \in f^{-1}(y_t) \cap B$. Then by the compactness of $B$, a sequence $\{(\hat{p}_t,\ell_t)\}_{t \in \mathbb{N}}$ has an accumulation point $x=(\hat{p}^*,\ell^*) \in B$. Then, by the continuity of $f$, we have $f(x)=y$. Thus $y \in \Im f$.
\end{proof}
We are now ready to prove Proposition~\ref{prop:finite}.
\begin{proof} [Proof of Proposition~\ref{prop:finite}]
    Suppose that $(\hat{G},z)$ is $d$-realizable.
    Then, for any nonnegative integer $d'$, we have
    \begin{equation} \label{eq:seq}
    f_{d'}(\mathbb{R}^{d'n}\times \mathbb{R}^{d'}_{\neq 0}) \subseteq f_d(\mathbb{R}^{dn} \times \mathbb{R}^d_{\neq 0}) \subseteq \Im f_d.
    \end{equation}
    Since the closure of $\mathbb{R}^{d'n}\times \mathbb{R}^{d'}_{\neq 0}$ is $\mathbb{R}^{d'n}\times \mathbb{R}^{d'}$, by the continuity of $f_{d'}$, the closure of $f_{d'}(\mathbb{R}^{d'n}\times \mathbb{R}^d_{\neq 0})$ includes $\Im f_{d'}$.
    Hence by Lemma \ref{lem:closed} and (\ref{eq:seq}), we get $\Im f_{d'} \subseteq \Im f_d$ for any $d'$.
    
    Let $(\si(\hat{G}),p)$ be a framework of a simple graph $\si(\hat{G})$ in $\mathbb{R}^{d'}$ for some $d'$. We show that there is an equivalent framework in $\mathbb{R}^d$.
    As $\Im f_{d'} \subseteq \Im f_d$, there exists $(q,\ell) \in \mathbb{R}^{dn}\times\mathbb{R}^{d}$ such that $f_d(q,\ell)=f_{d'}(p,0)$.
    Then $\|\ell\|^2=0$, i.e., $\ell=0$.
    Then the equiality $f_d(q,0)=f_{d'}(p,0)$ implies that $(\si(\hat{G}),q)$ is a framework in $\mathbb{R}^d$ equivalent to $(\si(\hat{G}),p)$.
    Thus, $\si(\hat{G})$ is $d$-realizable.
\end{proof}
Belk and Connelly~\cite{belk2007realizability} have shown that $\rd(K_n) = n-1$ and $\rd(K_{2,2,2}) \geq 4$.
Hence by Proposition~\ref{prop:finite}, we obtain the following obstacles in the periodic setting.
\begin{corollary} \label{cor:finite2}
    We have the followings.
    \begin{enumerate}
        \item[(i)] For any $\mathbb{Z}$-labelled graph $(K_n,z)$, $\rd(K_n,z) \geq n-1$.
        \item[(ii)] For any $\mathbb{Z}$-labelled graph $(K_{2,2,2},z)$, $\rd(K_{2,2,2},z) \geq 4$.
    \end{enumerate}
\end{corollary}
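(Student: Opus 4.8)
Here is how I would approach the proof of Corollary~\ref{cor:finite2}.

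The plan is to read off both statements directly from Proposition~\ref{prop:finite} together with the quoted results of Belk and Connelly on $\rd(K_n)$ and $\rd(K_{2,2,2})$. The only thing that requires a moment's thought is the relationship between $\si(\hat{G})$ and the underlying simple graph of a $\mathbb{Z}$-labelled graph whose directed multigraph is $K_n$ (respectively $K_{2,2,2}$). For part (i): if $(\hat{G},z)=(K_n,z)$ for some simple labelling $z$, then the directed multigraph $K_n$ is already simple and loopless, so by definition $\si(K_n)=K_n$ as a simple undirected graph. Hence Proposition~\ref{prop:finite} gives $\rd(K_n,z)\geq \rd(\si(K_n))=\rd(K_n)=n-1$, where the last equality is the result of Belk and Connelly~\cite{belk2007realizability}. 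For part (ii), exactly the same reasoning applies: $K_{2,2,2}$ is a simple loopless undirected graph, so $\si(K_{2,2,2})=K_{2,2,2}$, and Proposition~\ref{prop:finite} together with $\rd(K_{2,2,2})\geq 4$ yields $\rd(K_{2,2,2},z)\geq \rd(\si(K_{2,2,2}))=\rd(K_{2,2,2})\geq 4$.

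First I would state that $\si(D)=D$ whenever $D$ is already a simple loopless undirected graph (viewed as a directed graph by orienting its edges), which is immediate from the definition of $\si(\cdot)$: removing selfloops does nothing, forgetting direction undoes the orientation, and forgetting multiplicity does nothing since there are no parallel edges. Then I would invoke Proposition~\ref{prop:finite} to get $\rd(K_n,z)\geq\rd(K_n)$ and $\rd(K_{2,2,2},z)\geq\rd(K_{2,2,2})$. Finally I would cite~\cite{belk2007realizability} for the two facts $\rd(K_n)=n-1$ and $\rd(K_{2,2,2})\geq 4$ and conclude.

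There is essentially no obstacle here: the corollary is a packaging of Proposition~\ref{prop:finite} with external input. The only place one could slip is in the bookkeeping of what $\si$ does to the multigraph underlying a labelled complete graph --- in particular one should note that a simple labelling $z$ on $K_n$ never introduces parallel edges (since $K_n$ as a directed graph has no parallel edges to begin with), so no case distinction on $z$ is needed. Once that is observed, the two inequalities follow in one line each.
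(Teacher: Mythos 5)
Your proof is correct and is essentially the paper's own argument: the paper derives the corollary in one step by combining Proposition~\ref{prop:finite} with the Belk--Connelly facts $\rd(K_n)=n-1$ and $\rd(K_{2,2,2})\geq 4$, exactly as you do. Your extra remark that $\si(K_n)=K_n$ and $\si(K_{2,2,2})=K_{2,2,2}$ (since the underlying oriented graphs are simple and loopless, independently of the labelling) is a harmless explicit spelling-out of what the paper leaves implicit.
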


\subsection{Realizability of complete graphs}
We now provide obstacles of $d$-realizability by determining $\rd(\hat{G},z)$ for a $\mathbb{Z}$-labelled graph $(\hat{G},z)$ when $\si(\hat{G})$ is a complete graph.
For a directed multigraph $\hat{G}=(\hat{V},\hat{E})$, its \emph{multiplicity graph} is a simple undirected graph on $\hat{V}$ such that distinct vertices $i,j \in \hat{V}$ are adjacent if the multiplicity between $i$ and $j$ is at least $2$.
We have the following combinatorial statement of the linear matroid generated by $\{F_e:e \in \hat{E}^\circ\}$.
\begin{lemma} \label{lem:span}
    Let $(\hat{G},z)$ be a $\mathbb{Z}$-labelled graph. Let $\mathcal{F}(\hat{G},z):=\{F_e:e\in\hat{E}^\circ\} \subset \mathcal{L}^{n+1}$. We have the followings:
    \begin{itemize}
        \item[(i)] The set of matrices $\mathcal{F}(\hat{G},z)$ is linearly independent if and only if $\hat{G}$ has no selfloop, the multiplicity between any two vertices is at most two, and the multiplicity graph of $\hat{G}$ is a forest.
        \item[(ii)] The set of matrices $\mathcal{F}(\hat{G},z)$ spans the whole space $\mathcal{L}^{n+1}$ if and only if $\si(\hat{G})=K_n$ and the multiplicity graph of $\hat{G}$ is spanning.
    \end{itemize}
\end{lemma}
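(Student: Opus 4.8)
Write $n=|\hat{V}(\hat{G})|$. The argument rests on two facts. First, $\mathcal{L}^{n+1}$ has dimension $\binom{n+1}{2}$, since it is isomorphic to the space of symmetric bilinear forms on the $n$-dimensional space $\mathbb{R}^{n+1}/\spa\{\bm{1}_{n+1}'\}$. Second, for a family $(F_e)_{e\in\hat{E}^\circ}$ one has $\sum_e\omega_e F_e=I(\hat{G},z)^\top\diag(\omega)I(\hat{G},z)$, so a linear relation $\sum_e\omega_e F_e=0$ can be analysed by reading this matrix entrywise against the combinatorics of $(\hat{G},z)$. Part (ii) will be deduced from (i) by a rank count, so the substance is in (i). Throughout I use that each $F_e$ is invariant under edge inversion (so edges may be reoriented freely) and that simplicity forces parallel edges with a common orientation to carry distinct labels and selfloops to carry nonzero labels.

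\emph{Necessity in (i).} I would prove the contrapositive. If $\hat{G}$ has a selfloop $e_0$ then $i_{e_0}=z(e_0)\bm{e}_{n+1}$ with $z(e_0)\neq 0$, so $F_{e_0}=z(e_0)^2F_{e_L}$. If some pair $\{i,j\}$ has multiplicity at least three, take three parallel edges $e_1,e_2,e_3$; with $w=\bm{e}_j-\bm{e}_i$ and $u=\bm{e}_{n+1}$ the four matrices $F_{e_1},F_{e_2},F_{e_3},F_{e_L}$ all lie in $\spa\{ww^\top,\,wu^\top+uw^\top,\,uu^\top\}$, of dimension at most $3$, so they are dependent. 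If the multiplicity graph contains a cycle through distinct vertices $i_1,\dots,i_k$, pick for each consecutive pair $\{i_a,i_{a+1}\}$ (cyclically) two parallel edges $e_a,e_a'$ oriented $i_a\to i_{a+1}$, whence $z(e_a)\neq z(e_a')$; a direct computation gives
\[
    \frac{F_{e_a}-F_{e_a'}}{z(e_a)-z(e_a')}=(\bm{e}_{i_{a+1}}-\bm{e}_{i_a})\bm{e}_{n+1}^\top+\bm{e}_{n+1}(\bm{e}_{i_{a+1}}-\bm{e}_{i_a})^\top+(z(e_a)+z(e_a'))F_{e_L},
\]
and summing over $a$ the first two terms telescope to $0$, so $\sum_a(z(e_a)-z(e_a'))^{-1}(F_{e_a}-F_{e_a'})$ is a scalar multiple of $F_{e_L}$ --- in each case a nontrivial relation.

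\emph{Sufficiency in (i).} Assume $\hat{G}$ is selfloop-free, all multiplicities are at most two, the multiplicity graph $G_2$ is a forest, and $\sum_e\omega_e F_e=0$. Reading the $(i,j)$-entry for distinct $i,j\le n$ shows that the $\omega_e$, summed over the (at most two) edges joining $i$ and $j$, vanish; hence $\omega_e=0$ on every multiplicity-one pair, while $\omega_{e'}=-\omega_e$ for the two edges $e,e'$ of each multiplicity-two pair. Orient $G_2$ arbitrarily, orient both edges of each multiplicity-two pair consistently with it, and set $c_{ij}:=\omega_e(z(e')-z(e))$ for the $G_2$-edge $i\to j$; since $z(e)\neq z(e')$ we have $c_{ij}=0$ iff $\omega_e=0$. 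Reading the $(i,n{+}1)$-entry then becomes precisely the flow-conservation equation $\sum_{j:\,i\to j}c_{ij}=\sum_{j:\,j\to i}c_{ji}$ at each vertex $i$, so $c$ is a circulation on $G_2$; a forest carries no nonzero circulation, so every $c_{ij}$, hence every $\omega_e$ with $e\in\hat{E}$, is $0$, and then $\omega_{e_L}F_{e_L}=0$ forces $\omega_{e_L}=0$. I expect the main obstacle to be exactly this sign bookkeeping that turns the $(i,n{+}1)$-entries into Kirchhoff's law.

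\emph{Part (ii).} $\mathcal{F}(\hat{G},z)$ spans $\mathcal{L}^{n+1}$ if and only if a maximal linearly independent subfamily has size $\dim\mathcal{L}^{n+1}=\binom{n+1}{2}$. For the forward implication: since $F_{e_L}$ is a nonzero member, such a subfamily may be taken to contain it, so it has the form $(F_e)_{e\in\hat{E}'\cup\{e_L\}}$ with $\hat{E}'\subseteq\hat{E}$ and $|\hat{E}'|=\binom{n+1}{2}-1$. By (i), $(\hat{V},\hat{E}')$ is selfloop-free with multiplicities $\le 2$ and forest multiplicity graph $F'\subseteq G_2$, so $|\hat{E}'|$ equals the number of vertex pairs joined by an edge of $\hat{E}'$ plus $|F'|$, which is at most $\binom{n}{2}+(n-1)=\binom{n+1}{2}-1$; equality forces $\si(\hat{G})=K_n$ and $F'$ to be a spanning tree of $G_2$, hence $G_2$ is connected (spanning). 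For the converse: if $\si(\hat{G})=K_n$ and $G_2$ is connected, take one edge of $\hat{G}$ per vertex pair, a second edge for each pair in a fixed spanning tree of $G_2$, and $e_L$; by (i) this is an independent family of size $\binom{n}{2}+(n-1)+1=\binom{n+1}{2}$, hence a basis of $\mathcal{L}^{n+1}$, so $\mathcal{F}(\hat{G},z)$ spans. This proves (ii).
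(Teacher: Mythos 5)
Your proposal is correct, and it diverges from the paper in one substantive place. The necessity half of (i) and all of (ii) follow essentially the paper's line: explicit dependencies for selfloops, for triples of parallel edges, and for cycles in the multiplicity graph (your telescoping identity is a concrete, coefficient-explicit form of the paper's relation $\sum_{k} B_{i_k,i_{k+1}}=0$ with $B_{i,j}=E_{i,n+1}+E_{n+1,i}-E_{j,n+1}-E_{n+1,j}$), and for (ii) the same count $\binom{n}{2}+(n-1)+1=\binom{n+1}{2}=\dim\mathcal{L}^{n+1}$ combined with (i). The genuinely different step is your sufficiency proof of (i): you take a vanishing combination $\sum_e\omega_e F_e=0$ and read it entrywise, so that the $(i,j)$-entries kill $\omega$ on multiplicity-one pairs and make it antisymmetric on double pairs, and the $(i,n+1)$-entries turn into a circulation condition on the multiplicity graph, which vanishes on a forest; the $(n+1,n+1)$-entry then kills $\omega_{e_L}$. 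The paper instead augments $\hat{G}$ to a maximal graph with $\si(\hat{G})=K_n$ and spanning-tree multiplicity graph, shows the family spans $\mathcal{L}^{n+1}$ by producing the basis $\{A_{i,j}\}\cup\{B_{i,j}:\{i,j\}\in E(T)\}\cup\{E_{n+1,n+1}\}$ inside the span, and deduces independence from the cardinality count. Your route is more self-contained for independence (no augmentation, no explicit basis of $\mathcal{L}^{n+1}$) and then obtains (ii) as a pure counting corollary of (i); the paper's route proves the spanning assertion of (ii) directly along the way and records a basis of $\mathcal{L}^{n+1}$ that is convenient elsewhere. Your appeal to simplicity (distinct labels after reorienting a parallel pair to a common direction), which is what makes $z(e)-z(e')$ nonzero in both the cycle relation and the definition of $c_{ij}$, is stated and is exactly the needed fact, so I see no gap.
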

\begin{proof}
    Recall that $E_{i,j}$ denotes the square matrix of size $n+1$ whose $(i,j)$-th entry is one and the remainig entries are zero.
    For $1 \leq i<j\leq n$, define $A_{i,j}, B_{i,j} \in \mathcal{L}^{n+1}$ by $A_{i,j}=E_{i,i}+E_{j,j}-E_{i,j}-E_{j,i}$ and $B_{i,j}=E_{i,n+1}+E_{n+1,i}-E_{j,n+1}-E_{n+1,j}$.
    By the definition of $\mathcal{L}^{n+1}$, $\dim \mathcal{L}^{n+1}=\binom{n+1}{2}$, and $\{A_{i,j}:1 \leq i<j\leq n\} \cup \{B_{i,j}:\{i,j\}\in E(T)\} \cup \{E_{n+1,n+1}\}$ forms a basis of $\mathcal{L}^{n+1}$ for any spanning tree $T$ on $\{1,\ldots,n\}$. Note that $F_{e_L}=E_{n+1,n+1}$.

    Necessity of (i): Suppose that $\hat{G}$ contains a selfloop $e$, then $\{F_e, F_{e_L}\}$ is linearly dependent.
    Suppose that the multiplicity between $i$ and $j$ is at least three.
    If $e_1, e_2 \in \hat{E}$ are two parallel edges between $i$ and $j$, then we have
    \begin{equation} \label{eq:span1}
        \spa \{F_{e_L},F_{e_1},F_{e_2}\} = \spa \{A_{i,j},B_{i,j},E_{n+1,n+1}\}.
    \end{equation}
    If $e_3 \in \hat{E}$ is another edge between $i$ and $j$, then $F_{e_3}$ is in $\spa \{A_{i,j},B_{i,j},E_{n+1,n+1}\}$. So, by (\ref{eq:span1}), $\{F_{e_L}\}\cup\{F_{e_i}:i=1,2,3\}$ is linearly dependent.

    Suppose that the multiplicity graph contains a cycle on $i_1,\ldots,i_t \in \hat{V}$ ($t \geq 3$). Then $\hat{G}$ has parallel edges $e_k^1,e_k^2$ between $i_k$ and $i_{k+1}$ for $k=1,\ldots,t$ ($i_{t+1}=i_1$).
    Let $\mathcal{F}:=\{F_{e_L}\} \cup \{F_{e_j^i}:i=1,2, j=1,\ldots,t\}$. We show $\mathcal{F}$ is linearly dependent.
    By (\ref{eq:span1}), we have $B_{i_k,i_{k+1}} \in \spa \{F_{e_L},F_{e_k^1},F_{e_k^2}\} \setminus \spa\{F_{e_L}\}$ for each $k=1,\ldots,t$. Also by the definition of $B_{i,j}$, the identity $\sum_{k=1}^t B_{i_k,i_{k+1}}=0$ holds. Hence $\mathcal{F}$ are linearly dependent. Thus the necessity of (i) follows.

    Sufficiency of (i):
    For the sufficiency, by adding as many edges as possible, it suffices to prove the statement when $\si(\hat{G})=K_n$, $\hat{G}$ contains no selfloop, multiplicity between any two vertices is at most two, and the multiplicity graph is a spanning tree $T$ on $\{1,\ldots,n\}$. 
    Then $|\hat{E}^\circ|=|\hat{E}|+1=\binom{n}{2}+(n-1)+1=\binom{n+1}{2}=\dim \mathcal{L}^{n+1}$. So, it suffices to show that the linear subspace $\mathcal{K}$, a subspace spanned by $\{F_e:e\in\hat{E}^\circ\}$, contains a basis of $\mathcal{L}^{n+1}$.
    By (\ref{eq:span1}), $\mathcal{K}$ contains $A_{i,j}, B_{i,j}$ for $\{i,j\} \in E(T)$.
    
    For $\{i,j\} \not\in E(T)$, we claim that $\mathcal{K}$ contains $A_{i,j}$. To see this, let $i=i_1,\ldots,i_t=j$ be a path in the multiplicity graph, and let $e \in \hat{E}$ be an edge between $i$ and $j$.
    By (\ref{eq:span1}), $\mathcal{K}$ contains $A_{i_k,i_{k+1}}$ and $B_{i_k,i_{k+1}}$ for $k=1,\ldots, t-1$.
    Hence $B_{i_t,i_1}=-\sum_{k=1}^{t-1} B_{i_k,i_{k+1}}$ is contained in $\mathcal{K}$.
    Since $A_{i,j}=A_{i_t,i_1} \in \spa \{F_{e_L},F_{e},B_{i_t,i_1}\}$, $\mathcal{K}$ also contains $A_{i,j}$.

    Since $F_{e_L}=E_{n+1,n+1} \in \mathcal{K}$, $\mathcal{K}$ contains a basis $\{A_{i,j}:1 \leq i<j\leq n\} \cup \{B_{i,j}:\{i,j\}\in E(T)\} \cup \{E_{n+1,n+1}\}$. Thus the sufficiency of (i) follows.

    Necessity of (ii): Suppose that $\mathcal{F}(\hat{G},z)$ spans $\mathcal{L}^{n+1}$. Then there exists a subset $\hat{E}' \subseteq \hat{E}(\hat{G})$ containing $\{e_L\}$ such that $\{F_e:e \in \hat{E}'\}$ is a basis of $\mathcal{L}^{n+1}$. Define the $\mathbb{Z}$-labelled subgraph $(\hat{H},w)$ of $(\hat{G},z)$ by $\hat{V}(\hat{H})=\hat{V}(\hat{G})$, $\hat{E}(\hat{H})=\hat{E}'$, and $w:=z|_{\hat{E}'}$. By (i), $\hat{H}$ has no selfloop and multiplicity between any two vertices is at most two and the multiplicity graph of $\hat{H}$ is a forest.
    Since $|\hat{E}(\hat{H})^\circ|=\binom{n+1}{2}$, this implies that $\si(\hat{H})=K_n$ and the multiplicity graph of $\hat{H}$ is a spanning tree. Hence $\si(\hat{G})=K_n$ and the multiplicity graph of $\hat{G}$ is spanning.
    
    Sufficiency of (ii): Suppose that $\si(\hat{G})=K_n$ and the multiplicity graph of $\hat{G}$ is spanning. Then there is a $\mathbb{Z}$-labelled subgraph $(\hat{H},w)$ of $(\hat{G},z)$ such that $\si(\hat{H})=K_n$ and $\hat{H}$ contains no selfloop and multiplicity between any two vertices is at most two and the multiplicity graph is a spanning tree $T$ on $\{1,\ldots,n\}$. Then $|\hat{E}(\hat{H})^\circ|=\binom{n+1}{2}$, so $\mathcal{F}(\hat{H},w)$ is a basis of $\mathcal{L}^{n+1}$ by (i). Thus $\mathcal{F}(\hat{G},z)$ contains a basis of $\mathcal{L}^{n+1}$. Hence $\mathcal{F}(\hat{G},z)$ is spanning.

    
    
    
\end{proof}
We are now ready to determine the realizable dimension of $\mathbb{Z}$-labelled graphs $(\hat{G},z)$ satisfying $\si(\hat{G})=K_n$.
\begin{prop} \label{prop:Kn}
    Let $G$ be a $\mathbb{Z}$-symmetric graph and $(\hat{G},z)$ be its quotient $\mathbb{Z}$-labelled graph satisfying $\si(\hat{G})=K_n$. We have the followings:
    \begin{itemize}
        \item[(i)] If the multiplicity graph of $\hat{G}$ is not spanning, $\rd(G) = n-1$.
        \item[(ii)] If the multiplicity graph of $\hat{G}$ is spanning, every periodic framework $(G,p)$ in any dimension is periodically universally rigid. In particular, $\rd(G) =n$.
    \end{itemize}
\end{prop}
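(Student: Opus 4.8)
The plan is to handle the lower and upper bounds on $\rd$ separately, running in both parts the same linear‑algebraic engine over $\mathcal{S}^{n+1}$: one relates a periodic framework $(\hat{G},z,\hat{p},\ell)$ to the symmetric matrices $\begin{pmatrix}\hat{P}&\ell\end{pmatrix}^\top\begin{pmatrix}\hat{P}&\ell\end{pmatrix}$ and $\begin{pmatrix}\hat{P}&\ell\end{pmatrix}^\top S\begin{pmatrix}\hat{P}&\ell\end{pmatrix}$, and then applies Lemma~\ref{lem:span}(ii), which says exactly when $\mathcal{F}(\hat{G},z)=\{F_e:e\in\hat{E}^\circ\}$ spans $\mathcal{L}^{n+1}$.

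For part (i) the lower bound is immediate: since $\si(\hat{G})=K_n$, deleting all selfloops of $\hat{G}$ and all but one edge between each pair of vertices exhibits some $(K_n,z')$ as a subgraph, hence a minor, of $(\hat{G},z)$, so Proposition~\ref{prop:minor} and Corollary~\ref{cor:finite2}(i) give $\rd(G)=\rd(\hat{G},z)\ge\rd(K_n,z')\ge n-1$. For the upper bound I would take an arbitrary periodic framework $(G,p)$, replace it by an equivalent periodic framework $(G,r)$ of minimum affine dimension $k$, and (restricting to its affine span, which contains $\ell$) regard $(G,r)$ as an affine full‑dimensional periodic framework in $\mathbb{R}^k$. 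Since $k\le|V/\mathbb{Z}|=n$ always and we are done once $k\le n-1$, it remains to exclude $k=n$. By minimality and the contrapositive of Proposition~\ref{prop:flex}, $(G,r)$ satisfies the conic condition; I would contradict this by dimension counting. Writing $A=\begin{pmatrix}\hat{R}&\ell\end{pmatrix}\in\mathbb{R}^{n\times(n+1)}$, affine full‑dimensionality makes $\begin{pmatrix}\hat{R}&\ell\\\bm{1}_n^\top&0\end{pmatrix}$ an invertible $(n+1)\times(n+1)$ matrix, so $A$ has rank $n$ and $S\mapsto A^\top SA$ is a linear isomorphism from $\mathcal{S}^n$ onto $\{N\in\mathcal{S}^{n+1}:N\kappa=0\}$, where $\spa\{\kappa\}=\ker A$. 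As $v_e^\top Sv_e=\langle F_e,A^\top SA\rangle$ for $e\in\hat{E}$ and $\ell^\top S\ell=\langle F_{e_L},A^\top SA\rangle$, the conic condition becomes $\{N\in\mathcal{S}^{n+1}:N\kappa=0\}\cap(\spa\mathcal{F}(\hat{G},z))^\perp=\{0\}$. But with the multiplicity graph not spanning, Lemma~\ref{lem:span}(ii) forces $\spa\mathcal{F}(\hat{G},z)\subsetneq\mathcal{L}^{n+1}$, so $\dim\spa\mathcal{F}(\hat{G},z)\le\binom{n+1}{2}-1$; since $\dim\{N\in\mathcal{S}^{n+1}:N\kappa=0\}=\binom{n+1}{2}$ and $\dim\mathcal{S}^{n+1}=\binom{n+2}{2}$, the intersection is at least one‑dimensional, a contradiction. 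Hence $k\le n-1$ and $\rd(G)\le n-1$.

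For part (ii) I would prove the stronger universal rigidity statement directly. Fix a periodic framework $(G,p)$ and an equivalent periodic framework $(G,q)$; translating (an isometry) I may assume the representative vertices are centred, $\sum_i\hat{p}(i)=\sum_i\hat{q}(i)=0$, so that $A:=\begin{pmatrix}\hat{P}&\ell\end{pmatrix}$ and $B:=\begin{pmatrix}\hat{Q}&\ell'\end{pmatrix}$ satisfy $A\bm{1}_{n+1}'=B\bm{1}_{n+1}'=0$ and hence $A^\top A,B^\top B\in\mathcal{L}^{n+1}$. Equivalence together with equality of the lattice lengths gives $\langle F_e,A^\top A-B^\top B\rangle=0$ for every $e\in\hat{E}^\circ$, and since the multiplicity graph is spanning, Lemma~\ref{lem:span}(ii) says $\mathcal{F}(\hat{G},z)$ spans $\mathcal{L}^{n+1}$; hence $A^\top A-B^\top B$ lies in $\mathcal{L}^{n+1}$ and is orthogonal to it, so $A^\top A=B^\top B$. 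Then the columns of $A$ and $B$ have the same Gram matrix, so a linear isometry $U$ of a common ambient space satisfies $U\hat{p}(i)=\hat{q}(i)$ and $U\ell=\ell'$, giving $q=U\circ p$; thus $(G,q)$ is congruent to $(G,p)$ and $(G,p)$ is periodically universally rigid. Finally $\rd(G)=n$: the bound $\rd(G)\le n$ is automatic, and choosing $\hat{p}$ and $\ell$ generically in $\mathbb{R}^n$ gives a periodic framework of affine dimension exactly $n$ (the matrix $\begin{pmatrix}\hat{P}&\ell\\\bm{1}_n^\top&0\end{pmatrix}$ is generically nonsingular) which, being periodically universally rigid, has no equivalent periodic framework of smaller affine dimension.

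I expect the crux to be the dimension count in part (i): pinning down the image of $S\mapsto A^\top SA$ (where the rank of $A$ and the invertibility of $\begin{pmatrix}\hat{R}&\ell\\\bm{1}_n^\top&0\end{pmatrix}$ enter), recasting the conic condition as an intersection condition in $\mathcal{S}^{n+1}$, and combining it with the codimension estimate from Lemma~\ref{lem:span}(ii). The supporting reductions — restricting a minimum‑affine‑dimension framework to its affine span while retaining $\ell$, and centring the representatives in part (ii) — are routine but should be written out carefully, and once $A^\top A=B^\top B$ is available the passage to congruence is standard.
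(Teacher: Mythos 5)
Your proposal is correct, and the two halves sit differently relative to the paper. For part (i) you follow essentially the paper's route: the lower bound comes from $\rd(\si(\hat{G}))=\rd(K_n)=n-1$ via minor monotonicity/Corollary~\ref{cor:finite2}(i), and the upper bound comes from showing that an affine full-dimensional realization in $\mathbb{R}^n$ must violate the conic condition when the multiplicity graph is not spanning, and then invoking Proposition~\ref{prop:flex}; the paper phrases the dimension count directly in $\mathcal{S}^n$ (the conditions $\langle S,(\hat{P}\ \ell)F_e(\hat{P}\ \ell)^\top\rangle=0$ are at most $\binom{n+1}{2}-1$ independent constraints on a $\binom{n+1}{2}$-dimensional space), whereas you pull back through the isomorphism $S\mapsto A^\top SA$ onto $\{N\in\mathcal{S}^{n+1}:N\kappa=0\}$ and intersect with $(\spa\mathcal{F}(\hat{G},z))^\perp$ — equivalent bookkeeping, and your explicit reduction to a minimum-affine-dimension representative makes precise a step the paper leaves implicit. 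For part (ii) your route is genuinely different: the paper constructs an equilibrium stress matrix $\Omega=\sum_i a_ia_i^\top$ with nonnegative full signature using Lemma~\ref{lem:span}(ii), checks the conic condition, and then invokes periodic super stability via Proposition~\ref{prop:superstable} (which the paper itself does not prove, citing an unpublished manuscript), while you prove universal rigidity directly: after centring, $A^\top A-B^\top B\in\mathcal{L}^{n+1}$ is orthogonal to the spanning family $\{F_e:e\in\hat{E}^\circ\}$, hence zero, and equality of Gram matrices yields a congruence. Your argument is more elementary and self-contained (it bypasses the super-stability machinery and the unproven Proposition~\ref{prop:superstable}); the paper's version, at the cost of that dependency, exhibits the stress-matrix certificate and ties the example to the super-stability framework developed in Section~5. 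The concluding step $\rd(G)=n$ via a generic affine $n$-dimensional realization matches the paper's.
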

\begin{proof}
    By $\si(\hat{G})=K_n$, Proposition~\ref{prop:finite}, and Corollary~\ref{cor:finite2} (i), we have $n-1=\rd(K_n) \leq \rd(G)$. Since the realizable dimension is at most the number of vertex orbits, we also have $\rd(G)\leq |V(G)/\mathbb{Z}|=n$.
    Let $\hat{V}\subset V$ be a representative vertex set such that the quotient $\mathbb{Z}$-labelled graph of $G$ with respect to $\hat{V}$ is $(\hat{G},z)$.
    
    For (i), assume that the multiplicity graph of $\hat{G}$ is not spanning. Let $(G,p)$ be an affine $n$-dimensional framework in $\mathbb{R}^n$. We show that $(G,p)$ does not satisfy the conic condition. To see this, let $\hat{p}=p|_{\hat{V}}$ and let $\ell$ be the lattice vector of $(G,p)$. Observe that for a symmetric matrix $S \in \mathcal{S}^n$, (\ref{eq:conic}) holds if and only if
    \begin{align} \label{eq:conic2}
        \left\langle S, \begin{pmatrix} \hat{P} & \ell \end{pmatrix} F_e \begin{pmatrix} \hat{P} & \ell \end{pmatrix}^\top \right\rangle =0
        & \quad \text{ for all } e \in \hat{E}^\circ
    \end{align}
    holds. By Lemma~\ref{lem:span} (ii), there are at most $\binom{n+1}{2}-1$ linearly independent matrices in $\{F_e:e\in\hat{E}^\circ\}$. Hence there is a nonzero matrix $S \in \mathcal{S}^{n}$ satisfying (\ref{eq:conic2}). Thus we have confirmed that the conic condition does not hold. Then by Proposition~\ref{prop:flex}, $(G,p)$ admits a $d'$-dimensional equivalent periodic framework for some $d'<n$.
    Hence we have $\rd(G) \leq n-1$, so $\rd(G)=n-1$.
    
    For (ii), suppose that the multiplicity graph of $\hat{G}$ is spanning and $(G,p)$ is a periodic framework in $\mathbb{R}^d$.
    Since every framework can be considered as an affine full-dimensional framework in some dimension, and a periodic framework is periodically universally rigid if and only if a periodic framework congruent to it is periodic universally rigid, we may assume that the affine dimension of $(G,p)$ is $d$.
    Let $a_1, \ldots, a_{n-d} \in \mathbb{R}^{n+1}$ be a basis of the kernel of
    \[
        \begin{pmatrix} \hat{P} & \ell \\ \bm{1}_n^\top &0  \end{pmatrix}.
    \]
    Since $a_i$ is orthogonal to $\bm{1}_{n+1}'$, the matrix $\Omega=\sum_{i=1}^{n-d} a_i a_i^\top$ is in $\mathcal{L}^{n+1}$. Since $\mathcal{L}^{n+1}$ is spanned by $\{F_e:e \in \hat{E}^\circ\}$ by Lemma~\ref{lem:span} (ii), there exists $\omega \in \mathbb{R}^{\hat{E}^\circ}$ such that $\Omega = \sum_{e \in \hat{E}^\circ} \omega(e)F_e$. Then $\omega$ is an equilibrium stress of $(G,p)$ and $\Omega=L_{\hat{G},z,\omega}$. Since $\rank \Omega = n-d$ and $\Omega \succeq O$, the stress signature of $\omega$ is full and nonnegative, so it certifies the first condition of periodic super stability for $(G,p)$.
    It remains to check the conic condition. For this let $S \in \mathcal{S}^d$ be a matrix satisfying (\ref{eq:conic}). Then we have
    \[
        \left\langle \begin{pmatrix} \hat{P} & \ell \end{pmatrix}^\top S \begin{pmatrix} \hat{P} & \ell \end{pmatrix}, F_e \right\rangle =0
    \]
    for all $e \in \hat{E}^\circ$. By translation, we may assume that the center of gravity of $\hat{p}(\hat{V})$ is at the origin. Then $\begin{pmatrix} \hat{P} & \ell \end{pmatrix}^\top S \begin{pmatrix} \hat{P} & \ell \end{pmatrix} \in \mathcal{L}^{n+1}$ and it is orthogonal to matrices $\{F_e:e \in \hat{E}^\circ\}$, which spans $\mathcal{L}^{n+1}$ by Lemma~\ref{lem:span} (ii). Hence $\begin{pmatrix} \hat{P} & \ell \end{pmatrix}^\top S \begin{pmatrix} \hat{P} & \ell \end{pmatrix}=0$.
    As $\begin{pmatrix} \hat{P} & \ell \end{pmatrix}$ is row independent, this implies that $S=0$. Hence $(G,p)$ satisfies the conic condition.
    Thus we have confirmed that $(G,p)$ is periodically super stable.
    By Proposition~\ref{prop:superstable}, $(G,p)$ is periodically universally rigid.

    If $(G,p)$
    In particular, by considering the case $n=d$, we have $\rd(G)=n$.
\end{proof}
\section{Characterizations} \label{sec:characterization}
\subsection{Gluing operations}
We extend the $k$-sum operations of finite graphs to $\mathbb{Z}$-labelled graphs.

We denote a balanced simple complete graph of size $k\geq 0$ as $(K_k,0)$.
Suppose that $(\hat{G},z)$ is a $\mathbb{Z}$-labelled graph and $(\hat{G_1},z_1)$, $(\hat{G_2},z_2)$ are subgraphs of $(\hat{G},z)$ such that $\hat{V_1}\cup\hat{V_2}=\hat{V}$, $\hat{E_1}\cup\hat{E_2}=\hat{E}$, and $z_i=z|_{\hat{E_i}}$ for $i=1,2$. Then we say that $(\hat{G},z)$ is the \emph{union} of $(\hat{G_1},z_1)$ and $(\hat{G_2},z_2)$, and it is denoted as $(\hat{G_1},z_1) \cup (\hat{G_2},z_2)$.
The \emph{intersection} of $(\hat{G_1},z_1)$ and $(\hat{G_2},z_2)$ is defined as a $\mathbb{Z}$-labelled graph on $(\hat{V_1}\cap \hat{V_2}, \hat{E_1}\cap \hat{E_2})$ with the edge labelling $z_1|_{\hat{E_1}\cap \hat{E_2}}$, and it is denoted as $(\hat{G_1},z_1) \cap (\hat{G_2},z_2)$.
If $z_1=0$ and $(\hat{G_1},0) \cap (\hat{G_2},z_2)=(K_k,0)$, then $(\hat{G_1},0) \cup (\hat{G_2},z_2)$ is called the \emph{balanced $k$-sum} of $(\hat{G_1},0)$ and $(\hat{G_2},z_2)$.
We show that balanced $k$-sum preserves $d$-realizability.
\begin{lemma} \label{lem:bk-sum}
    Let $(\hat{G},z)$ be a balanced $k$-sum of $(\hat{G_1},0)$ and $(\hat{G_2},z_2)$ for some $k$. 
    Then $\rd(\hat{G},z) = \max\{\rd(\hat{G_1},0), \rd(\hat{G_2},z_2)\}$.
\end{lemma}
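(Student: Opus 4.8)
The plan is to prove the two inequalities $\rd(\hat{G},z)\ge\max\{\rd(\hat{G_1},0),\rd(\hat{G_2},z_2)\}$ and $\rd(\hat{G},z)\le\max\{\rd(\hat{G_1},0),\rd(\hat{G_2},z_2)\}$ separately. The first inequality is immediate from minor monotonicity: since $(\hat{G_1},0)$ and $(\hat{G_2},z_2)$ are subgraphs of $(\hat{G},z)$, Proposition~\ref{prop:minor} gives $\rd(\hat{G_i})\le\rd(\hat{G},z)$ for $i=1,2$, so the lower bound follows. The substance is the upper bound: we must show that if both $(\hat{G_1},0)$ and $(\hat{G_2},z_2)$ are $d$-realizable (for $d=\max\{\rd(\hat{G_1},0),\rd(\hat{G_2},z_2)\}$), then so is $(\hat{G},z)$.

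For the upper bound, first I would pass to the lift graphs. Write $G$, $G_1$, $G_2$ for the lift graphs of $(\hat G,z)$, $(\hat G_1,0)$, $(\hat G_2,z_2)$; since $(\hat G_1,0)\cap(\hat G_2,z_2)=(K_k,0)$ and $\hat V_1\cup\hat V_2=\hat V$, $\hat E_1\cup\hat E_2=\hat E$, the lift graph $G$ is the union of $G_1$ and $G_2$ glued along the lift of $(K_k,0)$, which is a disjoint union of $k$-cliques indexed by $\mathbb{Z}$ — crucially a \emph{finite} clique in each copy, because the gluing subgraph is balanced. Now let $(G,p)$ be an $\ell$-periodic framework in $\mathbb{R}^N$ for some $N$ and some lattice vector $\ell$. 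Restricting $p$ to $V(G_2)$ gives an $\ell$-periodic framework $(G_2,p|_{V(G_2)})$, which by $d$-realizability admits an equivalent $\ell$-periodic framework $(G_2,q_2)$ in $\mathbb{R}^d$. Likewise $(G_1,p|_{V(G_1)})$ has an equivalent $\ell$-periodic realization $(G_1,q_1)$ in $\mathbb{R}^d$. The two realizations $q_1$ and $q_2$ agree in edge lengths on the shared clique-lift, but need not agree as maps; the task is to reconcile them.

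The key step is a congruence-matching argument on the shared part. In each $\mathbb{Z}$-copy, the gluing region is the vertex set of a single $k$-clique together with, implicitly, the lattice translation structure: the positions of one representative copy of the $k$ glued vertices plus the lattice vector $\ell$ determine the positions of all its translates. Since $q_1$ and $q_2$ induce the same pairwise distances on this representative $k$-clique \emph{and} both use the same lattice vector $\ell$ (edge lengths and lattice length are preserved under our equivalence, and one can normalize the lattice vectors to be literally equal by an ambient rotation in $\mathbb{R}^d$), there is a single isometry $T$ of $\mathbb{R}^d$ — here I would argue $T$ may be taken to fix $\ell$, using that after translating both the center of gravity of the representative clique to the origin the map $q_1\mapsto q_2$ on these $k+(\text{lattice})$ points extends to an orthogonal map, possibly after embedding into a common $\mathbb{R}^d$ — such that $T\circ q_2$ agrees with $q_1$ on the lift of $(K_k,0)$ and $T$ commutes with the $\mathbb{Z}$-action. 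Define $q$ on $V(G)=V(G_1)\cup V(G_2)$ by $q=q_1$ on $V(G_1)$ and $q=T\circ q_2$ on $V(G_2)$; these are consistent on the overlap, $q$ is $\ell$-periodic (both pieces are, and $T$ fixes $\ell$), $q$ lands in $\mathbb{R}^d$, and it preserves every edge length of $G$ since each edge lies in $G_1$ or in $G_2$. Hence $(G,q)$ is an equivalent $\ell$-periodic realization in $\mathbb{R}^d$, so $(\hat G,z)$ is $d$-realizable, giving $\rd(\hat G,z)\le d$ and completing the proof.

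The main obstacle I anticipate is the congruence-matching step: making precise that agreement of pairwise distances on the $k$ glued vertices of one representative copy, \emph{plus} equality of lattice vectors, upgrades to an ambient isometry that is simultaneously $\mathbb{Z}$-equivariant and fixes $\ell$. One has to be careful that the affine span of the glued $k$ points together with $\ell$ may be a proper subspace of $\mathbb{R}^d$, so the extension of the partial isometry to all of $\mathbb{R}^d$ is non-unique but always possible; and one must check that a reflection is never forced in a way that conflicts with fixing $\ell$ — which is fine, since if necessary we may compose with a reflection in the hyperplane orthogonal to $\ell$, or simply absorb it because $d$-realizability is up to congruence in $\mathbb{R}^d$ anyway. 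I would handle this cleanly by first applying an isometry to $(G_2,q_2)$ so that its representative glued clique \emph{and} lattice vector coincide exactly with those of $(G_1,q_1)$, which is exactly the classical fact that two finite point sets (here the $k$ clique points plus the endpoint of $\ell$) with identical pairwise distances are related by an ambient isometry, then noting equivariance is automatic because the $\mathbb{Z}$-action on both realizations is translation by the \emph{same} $\ell$.
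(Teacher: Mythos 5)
Your lower bound and general setup are fine, but the upper bound has a genuine gap, and it sits exactly at the step you yourself flag as the main obstacle. You need a single isometry $T$ with $T\circ q_2=q_1$ on the \emph{entire} lift of $(K_k,0)$ (equivalently, after rotating so the lattice vectors agree, a $T$ that matches one representative copy of the glued clique \emph{and} fixes $\ell$). Such a $T$ need not exist: agreement on every copy forces $\|q_1(u)-q_1(w)-\gamma\ell\|=\|q_2(u)-q_2(w)-\gamma\ell\|$ for all glued $u,w$ and all $\gamma\in\mathbb{Z}$, i.e. $\langle q_1(u)-q_1(w),\ell_1\rangle=\langle q_2(u)-q_2(w),\ell_2\rangle$. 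These cross-copy distances correspond to non-edges of $G$ (the intersection contains only label-$0$ edges), so they are not controlled by equivalence, and $d$-realizability gives you no control over how the glued clique is tilted relative to the lattice direction in $q_1$ versus $q_2$. Hence your fallback claim that ``the $k$ clique points plus the endpoint of $\ell$ have identical pairwise distances'' is precisely what fails, and the classical congruence-extension fact does not apply. Note also that your argument never uses the hypothesis $z_1=0$ (balancedness of the whole first summand), only balancedness of the intersection; since Figure~\ref{fig:counter} exhibits two $2$-realizable $\mathbb{Z}$-labelled graphs glued along $(K_2,0)$ whose union is not $2$-realizable, any argument that does not invoke $z_1=0$ cannot be correct.

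The paper's proof sidesteps the problem by not gluing $q_1$ and an isometric image of $q_2$ as maps on the lifts. It takes the congruence $A(\cdot)+t$ matching the two placements of the glued clique on a single representative copy only (this exists because the clique's pairwise distances agree), and then defines $q(v+\gamma):=Aq_1(v)+t+\gamma\ell_2$ for $v\in\hat{V}_1$ and $q(v+\gamma):=q_2(v)+\gamma\ell_2$ for $v\in\hat{V}_2$; that is, it transplants only the representative positions coming from $q_1$ and re-periodizes them with the lattice vector $\ell_2$ of $q_2$. The restriction of $q$ to $V(G_1)$ is in general not congruent to $q_1$, but because $z_1=0$ every edge orbit of $G_1$ has a representative with both endvertices in the representative copy, so its length depends only on within-copy positions and is preserved by the orthogonality of $A$; edges of $G_2$ keep their $q_2$-lengths verbatim. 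That re-periodization step is the missing idea; with it, your outline becomes the paper's proof.
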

\begin{proof}
As $(\hat{G_1},0)$ and $(\hat{G_2},z_2)$ are subgraphs of $(\hat{G},z)$, minor monotonicity of $\rd(\cdot)$ implies $\rd(\hat{G},z) \geq \max\{\rd(\hat{G_1},0), \rd(\hat{G_2},z_2)\}$.

For the converse inequality, suppose that $(\hat{G_1},0)$ and $(\hat{G_2},z_2)$ are $d$-realizable.
Let $G=(V,E)$ be the lift graph of $(\hat{G},z)$ and let $\hat{V} \subset V$ be a representative vertex set such that the quotient $\mathbb{Z}$-labelled graph of $G$ with respect to $\hat{V}$ is $(\hat{G},z)$.
Let $G_i=(V_i,E_i)$ be the lift graph of $(\hat{G_1},z_i)$, and let $\hat{V}_i=V_i \cap \hat{V}$ for each $i=1,2$, where $z_1=0$. Note that $\hat{U}:=\hat{V}_1 \cap \hat{V}_2$ forms a clique of size $k$ in $G$.
Consider a periodic framework $(G,p)$ in some dimension. We show that $(G,p)$ admits an equivalent periodic framework in $\mathbb{R}^d$.

As both $G_1$, $G_2$ are $d$-realizable, there exists a periodic framework $(G_i,q_i)$ in $\mathbb{R}^d$ equivalent to $(G_i,p|_{V_i})$ for $i=1,2$. Let $\ell_i$ be the lattice vector of $(G_i,q_i)$ for $i=1,2$.
As $\hat{U}$ forms a clique in $G$, the finite frameworks $(K_{\hat{U}},q_1|_{\hat{U}})$ and $(K_{\hat{U}},q_2|_{\hat{U}})$ are congruent, where $K_{\hat{U}}$ is a complete graph on $\hat{U}$. Hence there is an orthogonal matrix $A \in O(\mathbb{R}^d)$ and $t \in \mathbb{R}^d$ such that $q_2(v)=Aq_1(v)+t$ for all $v \in \hat{U}$.
Define $q:V(G)\rightarrow \mathbb{R}^d$ by $q(v+\gamma):=Aq_1(v) + t + \gamma \ell_2$ for $v \in \hat{V}_1$ and $\gamma \in \mathbb{Z}$, and $q(v+\gamma):=q_2(v)+\gamma \ell_2$ for $v \in \hat{V}_2$ and $\gamma \in \mathbb{Z}$.
Then we claim that that $(G,p)$ is equivalent to $(G,q)$. For an edge in $E_2$, its length in $p$ equals to its length in $q_2$, so it equals to its length in $q$.
For an edge $e \in E_1$, as $(\hat{G_1},0)$ is balanced, there is an edge $e'$ in the same orbit as $e$ whose endvertices belong to $\hat{V}_1$. So it suffices to check the equality of an edge length of $e'$ in $p$ and $q$, which holds as $A$ is an orthogonal matrix.
Thus we have checked that $(G,p)$ is equivalent to $(G,q)$.
Hence the converse inequality follows.
\end{proof}
\begin{figure}
\centering
\begin{tikzpicture}
    \tikzset{vertex/.style={draw,fill=black,circle,inner sep=2pt,minimum size=3pt}}
    \node[vertex] (a) at (2,0) {}; \node (name_a) at (2.3,0) {$1$};
    \node[vertex] (b) at (0,1) {}; \node (name_b) at (-0.3,1) {$3$};
    \node[vertex] (c) at (2,2) {}; \node (name_c) at (2.3,2) {$2$};
    \draw[->,>=latex] (a) to node  [midway,below] {$+0$} (b);
    \draw[->,>=latex] (b) to [bend right=20] node [midway,below] {$+0$} (c);
    \draw[->,>=latex] (b) to [bend left=20] node [midway,above] {$+1$} (c);
    \draw[->,>=latex] (a) to node [midway,right] {$+0$} (c);
    \node (cap) at (1,-0.7) {(a)};
\end{tikzpicture}
\quad
\begin{tikzpicture}
    \tikzset{vertex/.style={draw,fill=black,circle,inner sep=2pt,minimum size=3pt}}
    \node[vertex] (a) at (2,0) {}; \node (name_a) at (1.7,0) {$1$};
    \node[vertex] (d) at (4,1) {}; \node (name_d) at (4.3,1) {$4$};
    \node[vertex] (c) at (2,2) {}; \node (name_c) at (1.7,2) {$2$};
    \draw[->,>=latex] (a) to node [midway,below] {$+0$} (d);
    \draw[->,>=latex] (a) to node [midway,left] {$+0$} (c);
    \draw[->,>=latex] (d) to node [midway,above] {$+1$} (c);
    \node (cap) at (3,-0.7) {(b)};
\end{tikzpicture}
\qquad
\begin{tikzpicture}
    \tikzset{vertex/.style={draw,fill=black,circle,inner sep=2pt,minimum size=3pt}}
    \node[vertex] (a) at (2,0) {}; \node (name_a) at (2,-0.3) {$1$};
    \node[vertex] (b) at (0,1) {}; \node (name_b) at (-0.3,1) {$3$};
    \node[vertex] (c) at (2,2) {}; \node (name_c) at (2,2.3) {$2$};
    \node[vertex] (d) at (4,1) {}; \node (name_d) at (4.3,1) {$4$};
    \draw[->,>=latex] (a) to node  [midway,below] {$+0$} (b);
    \draw[->,>=latex] (b) to [bend right=20] node [midway,below] {$+0$} (c);
    \draw[->,>=latex] (b) to [bend left=20] node [midway,above] {$+1$} (c);
    \draw[->,>=latex] (a) to node [midway,right] {$+0$} (c);
    \draw[->,>=latex] (a) to node [midway,below] {$+0$} (d);
    \draw[->,>=latex] (d) to node [midway,above] {$+1$} (c);  
    \node (cap) at (2,-0.7) {(c)};
\end{tikzpicture}
\caption{(a), (b) $2$-realizable $\mathbb{Z}$-labelled graphs. (c) The union of (a) and (b), which is not $2$-realizable.}
\label{fig:counter}
\end{figure}
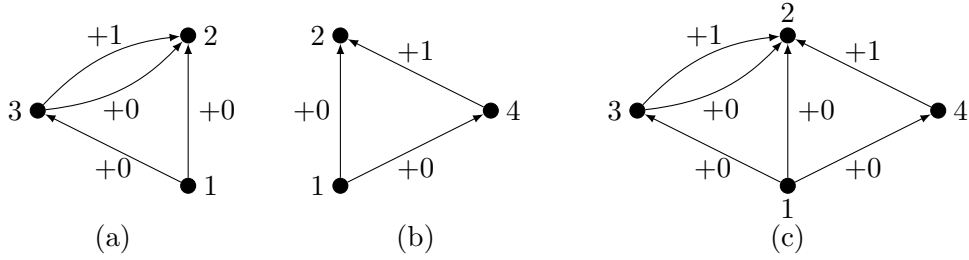
We remark that balancedness of one of the $\mathbb{Z}$-labelled graphs in Lemma~\ref{lem:bk-sum} is necessary, i.e., even if $(G_1,z_1)$ and $(G_2,z_2)$ are $d$-realizable and $(G_1,z_1) \cap (G_2,z_2) =(K_k,0)$ for some $k$, $(G_1,z_1) \cup (G_2,z_2)$ may not be $d$-realizable.
For example, consider $\mathbb{Z}$-labelled graphs (a), (b) in Figure~\ref{fig:counter}. The intersection of (a) and (b) is $(K_2,0)$, and by Proposition~\ref{prop:Kn} (i), (a) and (b) are $2$-realizable. On the other hand, their union shown in Figure~\ref{fig:counter} (c) is not $2$-realizable since if an edge between $1$ and $4$ is contracted, it becomes a graph $(K_{3}^{\bullet\bullet},z)$ for some labelling $z$, which is not $2$-realizable by Proposition~\ref{prop:Kn} (ii).

We provide another operation which also preserves $d$-realizability.
If $(\hat{G},z)=(\hat{G_1},z_1) \cup (\hat{G_2},z_2)$ and $\hat{V}(\hat{G}_1) \cap \hat{V}(\hat{G}_2) = \emptyset$, $(\hat{G},z)$ is called the disjoint union of $(\hat{G_1},z_1)$ and
$(\hat{G_2},z_2)$. For convenience, we consider that an empty $\mathbb{Z}$-labelled graph is a complete graph and its multiplicity graph is spanning.
\begin{lemma} \label{lem:UR-sum}
    Let $(\hat{G},z)=(\hat{G_1},z_1) \cup (\hat{G_2},z_2)$ and let $(\hat{H},w)=(\hat{G_1},z_1)\cap(\hat{G_2},z_2)$.
    Suppose that $\si(\hat{H})$ is a complete graph and the multiplicity graph of $\hat{H}$ is spanning.
    Then we have $\rd(\hat{G},z)=\max\{\rd(\hat{G_1},z_1), \rd(\hat{G_2},z_2)\}$.
\end{lemma}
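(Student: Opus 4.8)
The plan is to follow the template of the proof of Lemma~\ref{lem:bk-sum}, with the common subgraph $(\hat H,w)$ playing the role that the clique $(K_k,0)$ played there: instead of invoking the global rigidity of finite complete graphs to glue $(\hat G_1,z_1)$ and $(\hat G_2,z_2)$, I would invoke the periodic universal rigidity of the lift graph of $(\hat H,w)$, which is available precisely because $\si(\hat H)$ is complete with spanning multiplicity graph (Proposition~\ref{prop:Kn}(ii)). The inequality $\rd(\hat G,z)\ge\max\{\rd(\hat G_1,z_1),\rd(\hat G_2,z_2)\}$ is immediate from minor monotonicity (Proposition~\ref{prop:minor}), since $(\hat G_1,z_1)$ and $(\hat G_2,z_2)$ are subgraphs of $(\hat G,z)$. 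For the reverse inequality, set $d=\max\{\rd(\hat G_1,z_1),\rd(\hat G_2,z_2)\}$, let $G,G_1,G_2,H$ be the lift graphs of $(\hat G,z),(\hat G_1,z_1),(\hat G_2,z_2),(\hat H,w)$, and fix a representative vertex set $\hat V\subset V(G)$ whose quotient is $(\hat G,z)$. Then $V(G_1)\cup V(G_2)=V(G)$, $V(G_1)\cap V(G_2)=V(H)$, and $E(G)=E(G_1)\cup E(G_2)$.

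Given a periodic framework $(G,p)$ with lattice vector $\ell$, I would use the $d$-realizability of $G_1$ and $G_2$ to obtain periodic frameworks $(G_i,q_i)$ in $\mathbb R^d$ equivalent to $(G_i,p|_{V(G_i)})$, with lattice vectors $\ell_i$ of length $\|\ell\|$. Restricting to $H$, the periodic frameworks $(H,q_1|_{V(H)})$ and $(H,q_2|_{V(H)})$ are each equivalent to $(H,p|_{V(H)})$, hence to one another; by Proposition~\ref{prop:Kn}(ii) the former is periodically universally rigid, so there exist $A\in O(\mathbb R^d)$ and $t\in\mathbb R^d$ with $q_2(v)=Aq_1(v)+t$ for all $v\in V(H)$. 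Applying this identity to a pair $v,v+1\in V(H)$ yields $A\ell_1=\ell_2$ (when $\hat H$ is nonempty; if $\hat H$ is empty I would instead choose any $A\in O(\mathbb R^d)$ with $A\ell_1=\ell_2$, which exists since $\|\ell_1\|=\|\ell_2\|$, and $t=0$). Now define $q(v)=Aq_1(v)+t$ for $v\in V(G_1)$ and $q(v)=q_2(v)$ for $v\in V(G_2)$; these agree on $V(H)$, so $q$ is well defined, and since $A\ell_1=\ell_2$ it is $\ell_2$-periodic. One then checks that $(G,q)$ is equivalent to $(G,p)$: for an edge of $G_2$ this is immediate, and for an edge of $G_1$ it follows because $A$ is orthogonal. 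Hence $(G,q)$ is a periodic framework in $\mathbb R^d$ equivalent to $(G,p)$, which gives $\rd(\hat G,z)\le d$.

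The step I expect to need the most care is the gluing: one must verify that the congruence supplied by the periodic universal rigidity of $H$ automatically carries $\ell_1$ to $\ell_2$, so that the two pieces $Aq_1(\cdot)+t$ and $q_2(\cdot)$ assemble into a single periodic framework with one lattice vector of the correct length. This is also what makes the argument slightly cleaner than in Lemma~\ref{lem:bk-sum}, where the gluing was only along finite clique edges and no such automatic matching of lattice vectors occurs. The only other point to watch is the convention that the empty $\mathbb Z$-labelled graph counts as a complete graph with spanning multiplicity graph, corresponding to the disjoint-union case, which is handled by the free choice of $A$ above.
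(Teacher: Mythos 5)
Your proposal is correct and follows essentially the same route as the paper: the lower bound via minor monotonicity, and the upper bound by gluing the two realizations using the periodic universal rigidity of the lift of $(\hat H,w)$ from Proposition~\ref{prop:Kn}(ii), with the congruence automatically sending $\ell_1$ to $\ell_2$ so the pieces assemble into one periodic framework. The only differences are cosmetic: you write out the glued map $q=Aq_1+t$ on all of $V(G_1)$ directly (where the paper reuses the construction from Lemma~\ref{lem:bk-sum}) and you spell out the empty-intersection case that the paper dismisses as trivial.
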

\begin{proof}
    By the same argument in the first part of the proof of Proposition~\ref{lem:bk-sum}, it suffices to prove $\rd(\hat{G},z) \leq \max\{\rd(\hat{G_1},0), \rd(\hat{G_2},z_2)\}$.
    The case when $(\hat{H},w)$ is empty is trivial.
    Suppose that $(\hat{H},w)$ is not empty. Let $H$ be the lift graph of $(\hat{H},w)$.
    Note that, by Proposition \ref{prop:Kn} (ii), every periodic realization of $H$ is universally rigid.
    Now the proof follows the same line as Lemma~\ref{lem:bk-sum}. 
    We use the same notation as in the proof of Lemma~\ref{lem:bk-sum}.
    For a given periodic framework $(G,p)$, $(G_i,p|_{\hat{V_i}})$ admits an equivalent framework $(G_i,q_i)$ for $i=1,2$ in $\mathbb{R}^d$, where $d=\max\{\rd(\hat{G_1},z_1),\rd(\hat{G_2},z_2)\}$.
    Then by the periodical universal rigidity of $(H,p|_{V(H)})$, one can take an orthogonal matrix $A \in O(\mathbb{R}^d)$ and $t \in \mathbb{R}^d$ satisfying $q_2(v)=Aq_1(v) +t$ for all $v \in \hat{U}$ and $\ell_2=A\ell_1$, where $\ell_i$ is the lattice vector of $(G_i,q_i)$.
    Then, one can check that the same $(G,q)$ given in the proof of Lemma~\ref{lem:bk-sum} is equivalent to $(G,p)$.
\end{proof}

A special case of the situation in Lemma~\ref{lem:UR-sum} is when $\hat{V}_1 \cap \hat{V}_2$ is a singleton. In this case, the operation to take their union is called a \emph{$1$-sum}.

\subsection{Characterization for $d=1$}
Let $\mathcal{F}(K_{2}^\bullet)$ be the set of all simple $\mathbb{Z}$-labelled graphs of the form $(K_{2}^\bullet, z)$ for some $z$.
\begin{theorem} \label{thm:d=1}
    For a $\mathbb{Z}$-labelled graph $(\hat{G},z)$, the followings are equivalent:
    \begin{enumerate}
        \item[(i)] $\rd(\hat{G},z) \leq 1$.
        \item[(ii)] $(\hat{G},z)$ has no minor isomorphic to a $\mathbb{Z}$-labelled graph in $\mathcal{F}(K_{2}^\bullet) \cup \{(K_3,0)\}$.
        \item[(iii)] $(\hat{G},z)$ is obtained by a sequence of disjoint unions and $1$-sums of $\mathbb{Z}$-labelled graphs in $\mathcal{F}$, where $\mathcal{F}$ consists of all $\mathbb{Z}$-labelled graphs isomorphic to either $(K_2,0)$ or a $\mathbb{Z}$-labelled graph with single vertex.
        \item[(iv)] $\hat{G}$ does not contain a cycle other than selfloops.
    \end{enumerate}
\end{theorem}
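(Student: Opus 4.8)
The plan is to prove the cyclic chain of implications (i) $\Rightarrow$ (ii) $\Rightarrow$ (iv) $\Rightarrow$ (iii) $\Rightarrow$ (i), which yields the equivalence of all four conditions; the implication (iii) $\Rightarrow$ (iv) will also be recorded as an easy consistency check. The implication (i) $\Rightarrow$ (ii) is the quickest: by the minor monotonicity of $\rd(\cdot)$ (Proposition~\ref{prop:minor}) it suffices to check that every member of $\mathcal{F}(K_2^\bullet)\cup\{(K_3,0)\}$ has realizable dimension at least $2$. For $(K_3,0)$ this is Corollary~\ref{cor:finite2}(i) (equivalently Proposition~\ref{prop:finite} together with $\rd(K_3)=2$), and for $(K_2^\bullet,z)$ one notes that $\si(K_2^\bullet)=K_2$ is the complete graph on two vertices while the multiplicity between those vertices is $2$, so the multiplicity graph of $K_2^\bullet$ is spanning; hence Proposition~\ref{prop:Kn}(ii) gives $\rd(K_2^\bullet,z)=2$. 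A $\mathbb{Z}$-labelled graph with a minor in $\mathcal{F}(K_2^\bullet)\cup\{(K_3,0)\}$ therefore has $\rd\ge 2$, which is incompatible with (i).

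The heart of the argument is (ii) $\Leftrightarrow$ (iv), which I would prove in the sharper form: $(\hat G,z)$ has a minor isomorphic to a member of $\mathcal{F}(K_2^\bullet)\cup\{(K_3,0)\}$ if and only if $\hat G$ has a cycle of length at least two. For ``only if'' it is enough to show that the property ``$\hat G$ has no cycle of length at least two'' survives every minor operation, since both $(K_3,0)$ and each $(K_2^\bullet,z)$ contain such a cycle. Deletions are immediate, and for a contraction along $e=(i,j;0)$ one lifts any hypothetical cycle of length at least two in $\hat G/e$ back through $e$: a short case analysis on whether, and via which of $i$ and $j$, that cycle meets the identified vertex always produces a cycle of length at least two in $\hat G$. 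For ``if'', given a cycle $C$ of length $k\ge 2$ in $\hat G$, restrict $(\hat G,z)$ to $C$ and repeatedly contract its edges after switching their labels to $0$. If $C$ is unbalanced the process ends at a pair of parallel edges whose gain difference equals the nonzero gain of $C$; that labelling is simple, so we land in $\mathcal{F}(K_2^\bullet)$. If $C$ is balanced then $k\ge 3$ (a simple $2$-cycle is always unbalanced, directly from simplicity of the labelling), and contracting down to length $3$ gives a balanced triangle, which is isomorphic to $(K_3,0)$; here one must stop at length $3$, since contracting a balanced triangle collapses it to a single edge.

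For (iv) $\Rightarrow$ (iii): under (iv) the non-selfloop edges of $\hat G$ form a simple forest, so $\hat G$ is a simple forest with some selfloops attached. I would induct on $|\hat V(\hat G)|+|\hat E(\hat G)|$. If $\hat G$ has a selfloop $e$ at $v$, write $\hat G$ as the $1$-sum at $v$ of $\hat G-e$ and the single-vertex graph carrying $e$; if $\hat G$ has no selfloop but a degree-one vertex $v$ with incident edge $e$, write $\hat G$ as the $1$-sum at the other end of $e$ of $\hat G-v$ and the single edge $e$ (isomorphic to $(K_2,0)$ after a switching); if $\hat G$ is edgeless, split off one vertex by a disjoint union. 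In each case the split-off piece lies in $\mathcal{F}$ and the remainder is handled by the inductive hypothesis. The converse (iii) $\Rightarrow$ (iv) is clear: the building blocks contain no cycle of length at least two, disjoint unions create none, and every cycle of a $1$-sum lies inside one summand. Finally (iii) $\Rightarrow$ (i): by Proposition~\ref{prop:Kn} each building block has $\rd\le 1$ (for a single vertex, $n=1$ with trivially spanning multiplicity graph; for $(K_2,0)$ the multiplicity graph is not spanning, giving $\rd=n-1=1$), and by Lemma~\ref{lem:UR-sum} with the shared subgraph $\hat H$ either empty (disjoint union) or a single vertex (whose $\si$ is complete and whose multiplicity graph is trivially spanning), both operations preserve the bound $\rd(\cdot)\le1$.

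The step I expect to need the most care is (ii) $\Leftrightarrow$ (iv). In the ``if'' direction one must verify that the iterated contractions never leave the class of simple $\mathbb{Z}$-labelled graphs, the key point being that a simple $2$-cycle is automatically unbalanced, so an unbalanced cycle really does contract to a member of $\mathcal{F}(K_2^\bullet)$ rather than collapsing to a single edge. In the ``only if'' direction the delicate piece is the case analysis showing that contracting a $0$-labelled edge cannot create a new cycle of length at least two. Everything else is a short application of Propositions~\ref{prop:minor}, \ref{prop:finite}, \ref{prop:Kn} and Lemma~\ref{lem:UR-sum}.
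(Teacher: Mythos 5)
Your proof is correct and follows essentially the same route as the paper: the same implication cycle (i)$\Rightarrow$(ii)$\Rightarrow$(iv)$\Rightarrow$(iii)$\Rightarrow$(i), using minor monotonicity with Proposition~\ref{prop:Kn} (and Corollary~\ref{cor:finite2}) for the obstacles, contraction of a length-$\geq 2$ cycle to $(K_2^\bullet,z)$ or $(K_3,0)$ for (ii)$\Rightarrow$(iv), the forest decomposition into $1$-sums/disjoint unions for (iv)$\Rightarrow$(iii), and Lemmas~\ref{lem:bk-sum}/\ref{lem:UR-sum} for (iii)$\Rightarrow$(i). You simply spell out details the paper leaves terse (e.g., that a simple $2$-cycle is automatically unbalanced) and add an unneeded converse direction of (ii)$\Leftrightarrow$(iv).
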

\begin{proof}
    By Proposition~\ref{prop:Kn}, $\rd(K_2^\bullet,z)=2$ for any labelling $z$ and $\rd(K_3,0)=2$, so by minor monotonocity (i) implies (ii).
    By Proposition~\ref{prop:Kn}, $\rd(K_2,0)=1$ and the realizable dimension of a $\mathbb{Z}$-labelled graph with single vertex is at most $1$. So one can inductively check that (iii) implies (i) using Lemma~\ref{lem:bk-sum} and Lemma~\ref{lem:UR-sum}.
    Since any cycle (except for selfloops) in $\hat{G}$ can be contracted to either $(K_2^\bullet,z) \in \mathcal{F}(K_2^\bullet)$ or $(K_3,0)$, (ii) implies (iv).
    Finally if $(\hat{G},z)$ satisfies (iv), $\si(\hat{G})$ is a forest and the multigraph $\hat{G}$ has no parallel edge. Hence each connected component is obtained by a sequence of $1$-sums of $\mathbb{Z}$-labelled graphs in $\mathcal{F}$.
\end{proof}
We remark that as the minor relation between $\mathbb{Z}$-labelled graphs is equivalent to the minor relation between lift graphs, Theorem~\ref{thm:main} (i) immediately follows from the equivalence of (i) and (ii) in Theorem~\ref{thm:d=1}.
Given a $\mathbb{Z}$-labelled graph $(\hat{G},z)$, the cycle-free condition (iv) is checkable first by checking if there are any parallel edges, and then by checking if $\si(\hat{G})$ contains a cycle. This gives a polynomial time algorithm to check if a given $\mathbb{Z}$-labelled graph is $1$-realizable.

%

\subsection{Characterization for $d=2$}
Recall that $K_3^{\bullet \bullet}$ is a directed multigraph on $\{1,2,3\}$ whose edge set consists of a single edge from $1$ to $2$ and parallel edges between $\{1,3\}$ and $\{2,3\}$.
Let $\mathcal{F}(K_{3}^{\bullet\bullet})$ be the set of all simple $\mathbb{Z}$-labelled graphs of the form $(K_{3}^{\bullet\bullet},z)$ for some edge labelling $z$.
\begin{lemma} \label{lem:d=2}
    If a $\mathbb{Z}$-labelled graph $(\hat{G},z)$ has no minor isomorphic to a $\mathbb{Z}$-labelled graph in $\mathcal{F}(K_{3}^{\bullet\bullet}) \cup \{(K_4,0)\}$, then a simple graph $\si(\hat{G})$ does not have $K_4$ as a minor.
\end{lemma}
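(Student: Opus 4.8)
I would prove the contrapositive: assuming $\si(\hat G)$ has $K_4$ as a minor, I will exhibit a minor of $(\hat G,z)$ isomorphic to $(K_4,0)$ or to $(K_3^{\bullet\bullet},z)$ for some simple labelling $z$. Since $K_4$ has maximum degree $3$, a $K_4$-minor is automatically a topological minor, so $\si(\hat G)$ contains a subgraph $S$ that is a subdivision of $K_4$, with four branch vertices joined pairwise by internally disjoint paths $P_{ij}$. Choosing for each edge of $S$ a non-loop edge of $\hat G$ mapping onto it under $\si$ yields a subgraph $(\hat S,z)$ of $(\hat G,z)$ with $\si(\hat S)=S$. Every internal vertex of a path $P_{ij}$ has degree $2$ in $S$, so switchings at internal vertices of one path disturb no other path; proceeding along each path I switch all but its last edge to label $0$ and then contract those $0$-labelled edges. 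Because contracting degree-$2$ internal vertices creates neither selfloops nor parallel edges, the outcome is a minor $(H,w)$ of $(\hat G,z)$ whose underlying directed multigraph is \emph{exactly} $K_4$ (one edge per pair, no loops, no parallels).

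\textbf{Finishing from $(H,w)$.} If $(H,w)$ is balanced, switching all its labels to $0$ makes it isomorphic to $(K_4,0)$, and we are done. So assume $(H,w)$ is unbalanced. The triangles of $K_4$ span its cycle space, so some triangle of $(H,w)$ is unbalanced; in fact there are at least two, since the three triangles through any fixed vertex form a $\mathbb{Z}$-basis of the cycle space, so if all three were balanced then so would be $(H,w)$. Any two triangles of $K_4$ share exactly one edge $e$; write the two unbalanced triangles through $e$ as $\{1,2,3\}$ and $\{1,2,4\}$ with $e=\{1,2\}$. After a switching that sets $w(e)=0$, contract $e$, merging $1$ and $2$ into a vertex $m$. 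The edges $\{1,3\}$ and $\{2,3\}$ now both join $m$ and $3$, and they are \emph{not} identified by the simplification built into edge contraction, because with $w(e)=0$ the gain of the triangle $\{1,2,3\}$ equals the difference of their labels, which is nonzero; likewise $\{1,4\},\{2,4\}$ form a genuine parallel pair between $m$ and $4$, while $\{3,4\}$ survives as a single edge. The resulting contraction of $(H,w)$ is thus exactly $K_3^{\bullet\bullet}$ with apex $m$ and a simple labelling, i.e.\ a member of $\mathcal{F}(K_3^{\bullet\bullet})$, and it is a minor of $(\hat G,z)$ — the desired contradiction.

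\textbf{Main obstacle.} The crux is the unbalanced case, and there the two linked points to nail down are: (a) that a single unbalanced triangle in $(H,w)$ forces a second one sharing an edge with it — without this the contraction might leave only one doubled pair of edges, which does not contain a $K_3^{\bullet\bullet}$-minor; and (b) the verification that, after switching $w(e)$ to $0$, the edge pairs $\{1,3\},\{2,3\}$ and $\{1,4\},\{2,4\}$ are genuinely retained rather than collapsed, which is precisely the statement that the triangles $\{1,2,3\}$ and $\{1,2,4\}$ are unbalanced (via the gain computation). The remaining ingredients — reducing a $K_4$-minor to a $K_4$-subdivision, the path-by-path switching, and the observation that $(H,w)$ has underlying graph exactly $K_4$ — are routine and I would dispatch them briefly.
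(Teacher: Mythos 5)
Your proposal is correct and follows essentially the same route as the paper: extract a $\mathbb{Z}$-labelled $(K_4,w)$-minor, note it must be unbalanced, deduce that at least two of its triangles are unbalanced, and contract their common edge to produce a member of $\mathcal{F}(K_3^{\bullet\bullet})$. The extra details you supply (the subdivision argument lifting the $K_4$-minor of $\si(\hat{G})$ to a $\mathbb{Z}$-labelled minor with underlying graph exactly $K_4$, and the check that the parallel pairs survive the simplification step of contraction because the triangle gains are nonzero) are points the paper leaves implicit, not a different method.
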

\begin{proof}
    Suppose, on the contrary, that $K_4$ is a minor of a simple graph $\si(\hat{G})$. 
    Then $(\hat{G},z)$ has a $\mathbb{Z}$-labelled minor $(K_4,w)$.
    As $(K_4,0)$ is not a minor of $(\hat{G},z)$, $(K_4,w)$ is not balanced.
    We can further say that $(K_4,w)$ has at least two unbalanced cycles since the sum of labels along four cycles $1\rightarrow 2\rightarrow 3\rightarrow 1$, $2\rightarrow 3\rightarrow 4\rightarrow 2$, $3\rightarrow 4\rightarrow 1\rightarrow 3$, $4\rightarrow 1\rightarrow 2\rightarrow 4$ is zero, where $1,2,3,4$ are vertices of $(K_4,w)$.
    Then by contracting the common edge of two unbalanced cycles, one obtains $(K_3^{\bullet\bullet},z)$ for some simple labelling $z$, which is a contradiction.
\end{proof}

We are now ready to characterize $2$-realizable $\mathbb{Z}$-labelled graphs.
\begin{theorem} \label{thm:d=2}
    For a $\mathbb{Z}$-labelled graph $(\hat{G},z)$, the followings are equivalent:
    \begin{enumerate}
        \item[(i)] $\rd(\hat{G},z) \leq 2$.
        \item[(ii)] $(\hat{G},z)$ has no minor isomorphic to a $\mathbb{Z}$-labelled graph in $\mathcal{F}(K_{3}^{\bullet\bullet}) \cup \{(K_4,0)\}$.
        \item[(iii)] $(\hat{G},z)$ is a subgraph of a $\mathbb{Z}$-labelled graph obtained by a sequence of disjoint unions, $1$-sums and balanced $2$-sums of $\mathbb{Z}$-labelled graphs in $\mathcal{F}$, 
        where $\mathcal{F}$ consists of all $\mathbb{Z}$-labelled graphs isomorphic to either $(K_3,0)$ or a $\mathbb{Z}$-labelled graph with at most two vertices.
    \end{enumerate}
\end{theorem}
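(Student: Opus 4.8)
The plan is to close the cycle of implications $(iii)\Rightarrow(i)\Rightarrow(ii)\Rightarrow(iii)$. The implication $(iii)\Rightarrow(i)$ is the easiest: by Proposition~\ref{prop:Kn} every member of $\mathcal{F}$ has realizable dimension at most $2$ (a $\mathbb{Z}$-labelled graph on at most two vertices has $\rd\le 2$ since $\rd\le|V/\mathbb{Z}|$, and $\rd(K_3,0)=2$), and Lemmas~\ref{lem:bk-sum} and~\ref{lem:UR-sum} show that disjoint unions, $1$-sums and balanced $2$-sums preserve $d$-realizability; finally $\rd$ is minor monotone, hence subgraph monotone, by Proposition~\ref{prop:minor}. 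The implication $(i)\Rightarrow(ii)$ is again immediate from minor monotonicity together with the fact that every graph in $\mathcal{F}(K_3^{\bullet\bullet})\cup\{(K_4,0)\}$ has $\rd\ge 3$: $\rd(K_4,0)=\rd(K_4)=3$ by Proposition~\ref{prop:finite} and Corollary~\ref{cor:finite2}(i), and for $(K_3^{\bullet\bullet},z)$ one observes that if $z$ is simple then the multiplicity graph of $K_3^{\bullet\bullet}$ is spanning (the two double edges span all three vertices), so Proposition~\ref{prop:Kn}(ii) gives $\rd=3$.

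The real work is $(ii)\Rightarrow(iii)$, which is a structural decomposition argument of the type used by Belk--Connelly for finite graphs avoiding a $K_4$-minor. Suppose $(\hat{G},z)$ avoids every minor in $\mathcal{F}(K_3^{\bullet\bullet})\cup\{(K_4,0)\}$. By Lemma~\ref{lem:d=2}, the underlying simple graph $\si(\hat{G})$ has no $K_4$-minor, hence is a subgraph of a graph built from triangles and edges by $1$-sums and $2$-sums (the classical series-parallel-type decomposition of $K_4$-minor-free graphs). The strategy is to lift this decomposition of $\si(\hat{G})$ to a decomposition of $(\hat{G},z)$ itself. Working by induction on $|\hat{V}|+|\hat{E}|$: if $(\hat{G},z)$ is disconnected or has a cut vertex, split along it (disjoint union or $1$-sum) and apply induction to each piece, noting each piece still avoids the forbidden minors. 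Otherwise $\si(\hat{G})$ is $2$-connected and $K_4$-minor-free, hence (being series-parallel) has a $2$-separation $\si(\hat{G})=A\cup B$ with $|V(A)\cap V(B)|=2$, say on vertices $u,v$. The corresponding vertex set $\{u,v\}$ gives a decomposition $(\hat{G},z)=(\hat{G}_1,z_1)\cup(\hat{G}_2,z_2)$ with $\hat{V}_1\cap\hat{V}_2=\{u,v\}$; the remaining task is to understand the intersection $(\hat{H},w)=(\hat{G}_1,z_1)\cap(\hat{G}_2,z_2)$, a $\mathbb{Z}$-labelled graph on $\{u,v\}$, and show the gluing is permitted.

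The case analysis on $(\hat{H},w)$ is where I expect the main obstacle. If $\hat{H}$ has at least two parallel $uv$-edges then its multiplicity graph is spanning and $\si(\hat{H})=K_2$, so Lemma~\ref{lem:UR-sum} applies and we glue by the universal-rigidity-sum, finishing by induction. If $\hat{H}$ is a single $uv$-edge, one wants a balanced $2$-sum: switch so that this edge has label $0$; the point to check is that at least one side, say $(\hat{G}_1,z_1)$, can be taken balanced. This is the delicate step — it uses the exclusion of $(K_3^{\bullet\bullet},z)$ minors and of $(K_4,0)$ to force that an unbalanced cycle cannot be confined to a "pendant" $2$-connected piece glued along only two vertices by a single edge, so after rearranging the $2$-separation we may assume one side is balanced (and is itself, by induction, a subgraph of the desired kind, with all labels zero so that $(K_3,0)$ rather than an unbalanced triangle appears as a building block). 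If $\hat{H}$ has no $uv$-edge, we may add one with label $0$ to both sides (this only enlarges $(\hat{G},z)$ to a graph we still need to show lies in the target class, which is fine since $(iii)$ asks only for a subgraph), reducing to the single-edge case. Finally one checks the $\mathbb{Z}$-labelled building blocks that arise — a $2$-connected series-parallel graph all of whose nontrivial $2$-separations have been resolved — are themselves triangles or graphs on at most two vertices, possibly after noting that a $2$-connected $K_4$-minor-free graph with more structure would contain a contractible configuration producing a forbidden minor. Assembling these cases gives the decomposition in $(iii)$, completing the induction.
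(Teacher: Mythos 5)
Your outer structure ((iii)$\Rightarrow$(i) via Proposition~\ref{prop:Kn} with Lemmas~\ref{lem:bk-sum} and~\ref{lem:UR-sum}, and (i)$\Rightarrow$(ii) via Proposition~\ref{prop:Kn} and minor monotonicity) matches the paper and is fine. The problem is in (ii)$\Rightarrow$(iii), which you correctly identify as the real work but do not actually carry out, and one of your two branches is built on a category error. Statement (iii) is a purely combinatorial condition (membership in the class generated by disjoint unions, $1$-sums and \emph{balanced} $2$-sums of graphs in $\mathcal{F}$), yet in the branch where the $2$-separation meets in two or more parallel $uv$-edges you ``glue by Lemma~\ref{lem:UR-sum}''. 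That lemma only yields an equality of realizable dimensions; it does not produce the decomposition demanded by (iii), and gluing along a two-vertex multigraph is not one of the allowed operations. You cannot rescue this by saying the class of (iii) is closed under such gluings, because that closure is only known via the equivalence (i)$\Leftrightarrow$(iii) you are in the middle of proving -- the argument becomes circular. (At best your branch proves a version of (ii)$\Rightarrow$(i), which does not close your cycle of implications through (iii).)

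The second gap is the single-edge branch, where you assert that ``after rearranging the $2$-separation we may assume one side is balanced.'' This is exactly the crux, and your sketched justification (``an unbalanced cycle cannot be confined to a pendant $2$-connected piece glued along only two vertices by a single edge'') is not a correct statement: an unbalanced cycle can perfectly well live inside such a piece (e.g.\ a triangle on $\{u,a,v\}$ with an unbalanced cycle, attached to the rest along $\{u,v\}$); what must be shown is that then the \emph{other} side is balanced, and for a given choice of shared edge both sides can be unbalanced, so the choice of the gluing edge and its label also needs an argument. The paper does this differently and more concretely: it takes a degree-two vertex $v$ of $\si(\hat{G})$ (which exists by Lemma~\ref{lem:d=2} and $2$-connectivity), shows at most one of the multiplicities to its neighbours $x,y$ can be $\geq 2$ (otherwise an $x$--$y$ path gives a $K_3^{\bullet\bullet}$-minor), and in the multiplicity-$\geq 2$ case proves by a Menger two-disjoint-paths argument that $(\hat{G}-v,z)$ is balanced, so that after switching all of $\hat{G}-v$ to label $0$ the graph is a balanced $2$-sum of $(\hat{G}-v,0)$ and $(\hat{G}[\{x,y,v\}],z)$; when both multiplicities are one it simply contracts an edge at $v$ and presents $(\hat{G},z)$ as a subgraph of a balanced $2$-sum with $(K_3,0)$. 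Until you supply an argument of this kind (and remove the appeal to Lemma~\ref{lem:UR-sum} inside the combinatorial implication), the proof of (ii)$\Rightarrow$(iii) is incomplete.
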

\begin{proof}
    By Proposition \ref{prop:Kn}, we have $\rd(K_3^{\bullet\bullet},z)=3$ for any $z$ and $\rd(K_4,0)=3$, so by minor monotonicity, (i) implies (ii).
    By Proposition \ref{prop:Kn}, $\rd(K_3,0)=2$ holds, and $\mathbb{Z}$-labelled graphs with at most two vertices are $2$-realizable. So one can inductively check that (iii) implies (i) by Lemma~\ref{lem:bk-sum} and Lemma~\ref{lem:UR-sum}. It remains to prove a combinatorial statement that (ii) implies (iii). For this, we may assume that $\hat{G}$ does not have a selfloop.
    We prove this by induction on $|\hat{V}|$. When $|\hat{V}|=1,2,3$, one can directly check that the statement holds from Proposition \ref{prop:Kn}.
    Suppose, on the contrary, $(\hat{G},z)$ satisfies (ii), but not (iii). 
    Then $\si(\hat{G})$ is $2$-connected, since otherwise $(\hat{G},z)$ is a disjoint union or a $1$-sum of graphs satisfying (ii) with smaller vertices, so by induction hypothesis, $(\hat{G},z)$ satisfies (iii), which is a contradiction.
    By Lemma~\ref{lem:d=2}, $\si(\hat{G})$ does not have $K_4$ as a minor, so $\si(\hat{G})$ has a vertex $v \in \hat{V}$ with degree two in $\si(\hat{G})$ (cf.~\cite{diestel2016graph}).
    Let $x, y$ be two neighbors of $v$ in $\si(\hat{G})$.
    By $2$-connectivity, there is a path in $\si(\hat{G})$ between $x$ and $y$ not passing through $v$. 
    Hence to exclude $(K_{3}^{\bullet\bullet},z)$-minor, either multiplicity between $v$ and $x$ or multiplicity between $v$ and $y$ in $\hat{G}$ must be one.

    Suppose that both multiplicities are one in $\hat{G}$. Let $e_1$ (resp. $e_2$) be an edge between $v$ and $x$ (resp. $v$ and $y$). Switch at $x$ and $y$ so that both labels of $e_1$ and $e_2$ are zero.
    Then $(\hat{G},z)$ is a subgraph of a balanced $2$-sum of $(K_3,0)$ and $(\hat{G}/e_1,z)$. As $(\hat{G}/e_1,z)$ satisfies (ii) and the number of vertices is smaller than $|\hat{V}|$, it satisfies (iii) by induction hypothesis. Then $(\hat{G},z)$ also satisfies (iii), which is a contradiction.
    
    So we may assume that, in $\hat{G}$, the multiplicity between $v$ and $y$ is $1$ and the multiplicity between $v$ and $x$ is at least $2$.
    We claim that $(\hat{G}-v,z)$ is balanced. 
    To see this, suppose, on the contrary, that $(\hat{G}-v,z)$ contains an unbalanced cycle $C$. 
    By Menger's theorem, in $\si(\hat{G})$, there exist vertex disjoint paths $P_1$ and $P_2$ such that $P_1$ connects $u_1 \in V(C)$ and $x$, $P_2$ connects $u_2 \in V(C)$ and $y$, and $V(P_i) \cap V(C)=\{u_i\}$ for $i=1,2$.
    Contract $P_1$ and $P_2$, and contract $C$ to a pair of parallel edges. Then we obtain a $(K_{3}^{\bullet\bullet},z)$-minor, which is a contradiction.
    Hence $(\hat{G}-v,z)$ is balanced. 
    
    Switch all the labels of edges in $\hat{E}[\hat{G}-v]$ to $0$. We prove that $(\hat{G},z)$ satisfies (iii). For this, by adding an edge if necessary, we may assume that $(x,y;0) \in \hat{E}(\hat{G})$.
    $(\hat{G}-v,0)$ is balanced and $\si(\hat{G}-v)$ is an ordinary graph minor of $\si(\hat{G})$, which does not contain $K_4$ as a minor. Hence $(\hat{G}-v,0)$ satisfies (ii).
    Let $(\hat{G}[\{x,y,v\}], z)$ be a $\mathbb{Z}$-labelled graph in which the underlying graph is an induced directed multigraph of $\hat{G}$ on $\{x,y,v\}$ and the edge labelling is restricted to the remaining edges. Then it is a $\mathbb{Z}$-labelled graph minor of $(\hat{G},z)$, so it satisfies (ii).
    By induction hypothesis, both $(\hat{G}-v,0)$ and $(\hat{G}[\{x,y,v\}], z)$ satisfy (iii). Since $(\hat{G},z)$ is a balanced $2$-sum of $(\hat{G}-v,0)$ and $(\hat{G}[\{x,y,v\}],z)$, $(\hat{G},z)$ satisfies (iii). But this contradicts our assumption that $(\hat{G},z)$ does not satisfy (iii). Thus we have confirmed that (ii) implies (iii).
\end{proof}
We remark that Theorem~\ref{thm:main} (ii) immediately follows from Theorem~\ref{thm:d=2}. By following the proof of Theorem~\ref{thm:d=2}, we obtain the following polynomial time algorithm to check $2$-realizability of $\mathbb{Z}$-labelled graphs. Given a $\mathbb{Z}$-labelled graph $(\hat{G},z)$, remove all the selfloops. If the degree of every vertex in $\si(\hat{G})$ is at least $3$, it is not $2$-realizable. Otherwise, if $\si(\hat{G})$ is not connected or it has a size one separator, recursively check the smaller $\mathbb{Z}$-labelled graphs. Otherwise $\si(\hat{G})$ has a degree two vertex $v$ with neighbors $x,y$. If the multiplicity between $v$ and $x$ the multiplicity betwenn $v$ and $y$ are both $2$, it is not $2$-realizable. If both multiplicities are one, contract $vx$ and check recursively. In the remaining case, delete $v$ from $(\hat{G},z)$ and check if $(\hat{G}-v,z)$ is balanced or not, which can be done by taking rooted spanning tree in $\si(\hat{G})$ and by doing switching from the root.

\section{Closing Remark}
    

A characterization of $3$-realizable $\mathbb{Z}$-labelled graphs is open. If there is a size $2$ separator, we can provably reduce the size. So, by~\cite{belk2007realizability}, the characterization of $3$-realizable $\mathbb{Z}$-labelled graphs $(\hat{G},z)$ is reduced to two cases;
the first case is when the treewidth of $\si(\hat{G})$ is at most $3$, and the second case is when $\si(\hat{G})=C_5 \times C_2$ or $V_8$.
As in \cite{so2007semidefinite, so2006semidefinite}, SDP formulation might help to find an equivalent framework in lower affine dimensional space.


Realizable dimension has been defined for other metric spaces such as spherical metric space \cite{laurent2014new,laurent2014positive} and $\ell_p$-space \cite{fiorini2017excluded,sitharam2014flattenability}.
In~\cite{laurent2014new,laurent2014positive}, a spherical variant $\gd^=$ of realizable dimension is introduced, and graphs with $\gd^=$ at most $2,3$, or $4$ are characterized.
They also exploit SDP formulation and deduce a relationship between $\gd^=$ and Colin de Verdi\`{e}re number $\nu^=$.
It is not clear if those results can be extended to $\mathbb{Z}$-labelled graphs.

\paragraph{Acknowledgement} 
This work was supported by JST PRESTO Grant Number JPMJPR2126, JST ERATO Grant Number JPMJER1903, and JST ACT-X Grant Number JPMJAX2009, Japan.

\bibliographystyle{plain}
\bibliography{myreference}

\end{document}